\newtheorem{thm}{Theorem}
\newtheorem{lem}{Lemma}
\newtheorem{prop}{Proposition}
\newtheorem{rem}{Remark}
\title{A linear Galerkin numerical method for a quasilinear subdiffusion equation\footnote{This is the accepted version of the manuscript published in \textit{Applied Numerical Mathematics} \textbf{185} (2023), 203-220 with DOI: \url{https://doi.org/10.1016/j.apnum.2022.11.020}}}
\author{\L ukasz P\l ociniczak\thanks{Faculty of Pure and Applied Mathematics, Wroc{\l}aw University of Science and Technology, Wyb. Wyspia{\'n}skiego 27, 50-370 Wroc{\l}aw, Poland, lukasz.plociniczak@pwr.edu.pl}}
\date{}
\begin{document}
\maketitle

\begin{abstract}
We couple the L1 discretization for the Caputo derivative in time with the spectral Galerkin method in space to devise a scheme that solves quasilinear subdiffusion equations. Both the diffusivity and the source are allowed to be nonlinear functions of the solution. We prove the stability and convergence of the method with spectral accuracy in space. The temporal order depends on the regularity of the solution in time. Furthermore, we support our results with numerical simulations that utilize parallelism for spatial discretization. Moreover, as a side result, we find exact asymptotic values of the error constants along with their remainders for discretizations of the Caputo derivative and fractional integrals. These constants are the smallest possible, which improves previously established results from the literature. \\

\noindent\textbf{Keywords}: subdiffusion, Caputo derivative, L1 scheme, nonlinearity, error constant\\

\noindent\textbf{AMS Classification}: 35K55, 65M70, 35R11
\end{abstract}

\section{Introduction}
We are interested in numerically solving the following time-fractional nonlinear problem with homogeneous Dirichlet conditions
\begin{equation}
\label{eqn:MainPDE}
\begin{cases}
	\partial^\alpha_t u = \left(D(u) u_x\right)_x + f(x, t, u), & x\in(0,1), \quad t \in (0,T), \quad \alpha\in(0,1), \\
	u(x,0) = \varphi(x), & \\
	u(0,t) = 0, \; u(1,t) = 0, & \\
\end{cases}
\end{equation}
where the Caputo derivative is defined for sufficiently smooth functions as \cite{Li19}
\begin{equation}
\label{eqn:Caputo}
\partial^\alpha_t u(x,t) = \frac{1}{\Gamma(1-\alpha)} \int_0^t (t-s)^{-\alpha} u_t(x,s)ds,
\end{equation}
which can also be written in terms of the fractional integral
\begin{equation}
\label{eqn:FracInt}
I^\alpha_t u(x,t) = \frac{1}{\Gamma(\alpha)} \int_0^t (t-s)^{\alpha-1} u(x,s) ds,
\end{equation}
as $\partial^\alpha_t u = I^{1-\alpha}_t u_t$. In this work, we assume that $\alpha\in(0,1)$ corresponds to the subdiffusive regime of the process. Probabilistically, this type of evolution can arise when a randomly walking particle exhibits waiting times that lower the mean square displacement \cite{Met00,Kla12}. For example, in the study of a porous medium, water percolates the solid matrix. For certain materials, it can become trapped in some of its regions or undergo a chemical reaction that also produces a subdiffusive characteristic of the process \cite{Plo15,Plo14}. All of these situations in which the modeling with a time-fractional derivative is relevant point to some instance of the memory of the process that has an essential influence on the present state \cite{Kla12}. On the other side of the diffusion spectrum is the faster counterpart of this process, superdiffusion. It arises when the particle can jump for substantial distances in a given instant \cite{Met00}. A model for that phenomenon frequently involves operators that are nonlocal in space such as the fractional Laplacian \cite{Plo19, Vaz17}. Consequently, for modeling both waiting times and long jumps, one usually uses operators nonlocal in time and space. Subdiffusion plays an important role in many different fields of science and engineering such as turbulent flow \cite{Hen02}, material science \cite{Mul96}, viscoelasticity \cite{Amb96}, cell biochemistry \cite{Sun17}, biophysics \cite{Kou08}, plasma physics \cite{Del05}, and hydrology \cite{El04,El20}.

There is a growing literature on the existence, uniqueness, and regularity of solutions to the linear case of (\ref{eqn:MainPDE}). We would like to mention only several of the multitude of interesting papers. For example, in the seminal work \cite{Sak11} the authors established the unique existence of a weak solution for the case with the symmetric linear uniformly elliptic operator and $f(x,t,u) = f(x,t)$. Moreover, they have given many estimates regarding the asymptotic behavior of solutions for small and large times, along with regularity estimates. Further results concerning the abstract setting of time-fractional equations in Sobolev spaces can be found in \cite{Gor15}, \cite{All16} or \cite{Dip21}. Some other extensions and generalizations can be found, for example, in \cite{Zha19}. On the other hand, to the best of our knowledge, the study of nonlinear time-fractional diffusion equations is much less mature. In \cite{Dip19} authors studied a very general form of (\ref{eqn:MainPDE}) and provided decay estimates of the solution. Furthermore, bounded weak solutions of a variant of (\ref{eqn:MainPDE}) with a generalized form of the memory kernel have been investigated in \cite{Ver15,Wit21}. In \cite{Liu18} the authors studied the existence and uniqueness of the stochastic version of nonlinear PDE. The existence of a unique strong solution of the quasilinear case was established in \cite{Zac12} using the interior H\"older continuity. The viscosity solution method has been applied to a quasilinear subdiffusion equation in \cite{Top17}. Finally, we mention the time-fractional porous medium equation that has been analyzed in \cite{Plo14, Plo15} where some approximate solutions have been given. Moreover, in \cite{Plo17a} the existence and uniqueness of a self-similar solution have also been proved. 

Various numerical methods have been proposed to solve (\ref{eqn:MainPDE}) in the linear case. While spatial discretization is usually done with finite differences, finite elements, or spectral methods, the Caputo derivative (or its generalizations) is discretized mostly by the L1 scheme, Gr"unwald-Letnikov approximation, or convolution quadrature. The latter utilizes the special structure of the fractional derivative as a convolution operator and is based on discretization in the Laplace transform variable. This method has been introduced to solve integral equations \cite{Lub88,Sch06} and provides a very useful scheme for generating versatile discretizations of nonlocal operators \cite{Cue06,Zen15}. On the other hand, Gr\"unwald-Letnikov discretizationThe Gr"unwald-Letnikov is a generalization of classical finite differences applied to the temporal derivative \cite{Li19}. In \cite{Yus05} an explicit finite difference scheme has been utilized to solve the linear variant of (\ref{eqn:MainPDE}). In \cite{Cui09} the authors used a compact scheme to improve spatial accuracy. Gr\"unwald-Letnikov discretization has also been used to solve subdiffusive Fokker-Planck equation \cite{Liu04}. Finally, there exists a family of numerical methods based on piecewise linear approximation of the differentiated function, the L1 scheme originally proposed in \cite{Old74} to discretize the Riemann-Liouville derivative. One of the first works to utilize this type of approximation was \cite{Lan05} in which the authors used the implicit finite difference for the space derivative. Some other approaches and generalizations with similar spatial discretizations have been conducted, for example, in \cite{Zhu06,Liu07}. The L1 method is also conveniently coupled with Finite Element space discretization. In \cite{Zhu06} this scheme has been used to solve the cable equation while in \cite{Zhu16} authors provided a fully abstract setting in the Sobolev spaces. The important problem for methods with non-smooth data has been solved in \cite{Jin13,Lia18,Jin16}. It arises in many cases, since the solution of the linear subdiffusion equation frequently can exhibit a weak temporal singularity in the derivative (see \cite{Sak11,Sty16} for the estimates and \cite{Jin19} for a review of numerical methods). Similarly to the finite element methods, the spectral schemes have been proposed in order to provide superb spatial accuracy with minimal computational expense. Computing nonlocal operators is usually very demanding on processing power, and hence one usually wants to optimize the simulation as a whole. Spectral methods and their exponential order are one way to deal with this problem \cite{Can07}. For this approach, we refer to \cite{Lin07}. A very interesting idea of using the spectral approximation for \emph{both} space and time has been proposed in \cite{Li09}. Since the PDE has a nonlocality in time, using global temporal basis instead of time-stepping is well justified and advantageous. We also refer to \cite{Mus09} for a discontinuous Galerkin method used to solve equations similar to ours. Further generalizations of the L1 scheme involve Crank-Nicolson methods \cite{Jin18} or central differences \cite{Li10} to improve temporal order \cite{Li14}. Additional interesting results on high order \cite{wang2022second,cen2021second,cen2022time,ou2022mathematical,qiao2020alternating}. 
In contrast to the above, the analysis of numerical methods when (\ref{eqn:MainPDE}) involves a nonlinearity is much less evolved. This is mostly due to the difficulty to resolve the interplay between nonlocality and nonlinearity in the discrete setting. According to the best knowledge of the author, most papers deal with a nonlinear source only. In \cite{Li19c} a linearized Galerkin scheme has been proposed to solve the quasilinear equation with a graded temporal mesh. A very useful tool for dealing with nonlinear equations, namely the discrete fractional Gr\"onwall inequality, was developed in \cite{Li18} and later generalized in \cite{Lia18,Lia19}. Subsequently, this inequality has been used in \cite{Ren20} to devise an efficient two-level linearized Finite Element scheme. In addition, an approach based on the spectral approximation has been applied to the nonlocal in time and space (due to the Riesz derivative) in \cite{Zak20}. Finally, we mention our previous works on (\ref{eqn:MainPDE}) with quasilinear case, i.e. when $D(u)\neq $ const. In \cite{Plo19,Plo14} we have considered the time-fractional porous medium equation with $D(u) = u^m$. Our approach was based on providing a family of numerical methods for the setting of the nonlinear Volterra equation that can be obtained in that case by suitable transformations that yield the Erd\'elyi-Kober fractional operator \cite{Plo17,Plo17b}. Further results can be found in \cite{Okr21} and we also refer to a recent result on the spectral method applied to a nonlinear and nonlocal equation that arises in climatology \cite{Plo21}. A comprehensive survey of numerical methods for fractional differential equations can be found in \cite{Li19a}.

In this paper we devise a linear Galerkin spectral method with L1 temporal scheme to solve (\ref{eqn:MainPDE}). We note, however, that our proofs can be readily adapted to the Finite Element method for spatial discretization (see, for example, \cite{Tho07}). The novelty of this work is considering the quasilinear diffusivity and not only the source. First, in the next section, we start with a discussion of the L1 scheme and as a side-result we obtain a sharp estimate on the error constant for this approximation, which improves the known bounds from the literature. Reasoning is carried out with the use of asymptotic analysis, and we also apply it to the fractional integral. However, this case yields a result of a much different nature. In Section 3 we devise the aforementioned Galerkin method. The linearization is achieved by extrapolating all nonlinearities. In the proofs, we also use the discrete fractional Gr\"onwall lemma from \cite{Li18}. Despite the strong non-linearity of the main PDE, our method retrieves the $2-\alpha$ temporal order for smooth solutions and spectral accuracy in space, which is verified by several numerical experiments programmed in Julia. 

\section{The L1 scheme}
Before we devise a numerical method for solving (\ref{eqn:MainPDE}) we focus on the L1 discretization of the Caputo derivative. Let $y: (0,T) \mapsto \mathbb{R}$ be a function defined on the temporal domain. Moreover, set up the time mesh
\begin{equation}
\label{eqn:TimeMesh}
t_n = n h \leq T,
\end{equation} 
where $h>0$ is the time step. One very common choice of discretization is to apply the rectangle product integration rule to the integral in (\ref{eqn:Caputo}) along with a simple finite difference to arrive at the so-called L1 scheme (see \cite{Li19})
\begin{equation}
\label{eqn:L1Scheme}
\delta^\alpha y(t_n) = \frac{h^{-\alpha}}{\Gamma(2-\alpha)} \sum_{i=0}^{n-1} b_{n-i}(1-\alpha) (y(t_{i+1})-y(t_i)),
\end{equation}
with weights
\begin{equation}
\label{eqn:Weights}
b_j(\beta) = j^\beta - (j-1)^\beta. 
\end{equation}
The above scheme can also be written in the following way that is easier to implement in numerical schemes
\begin{equation}
\label{eqn:L1Scheme2}
\delta^\alpha y(t_n) = \frac{h^{-\alpha}}{\Gamma(2-\alpha)}\left(y(t_n) - b_n y(0) -  \sum_{i=1}^{n-1} \left(b_{n-i}(1-\alpha) - b_{n-i+1}(1-\alpha)\right) y(t_i) \right).
\end{equation}
By $R_n(h)$ denote the remainder of the L1 approximation, that is,
\begin{equation}
\label{eqn:L1Error}
\left|\partial^\alpha y(t_n) - \delta^\alpha y(t_n) \right| =: \epsilon_n(h)
\end{equation}
It is a well-known fact that for twice differentiable functions the above L1 scheme has an order of accuracy equal to $2-\alpha$, i.e. (see \cite{Li19,Li19a})
\begin{equation}
\label{eqn:L1Order}
\epsilon_n(h) \leq C_n h^{2-\alpha} \max_{t\in (0,T)} |y''(t)|, 
\end{equation}
where, to keep the notation compact, we retained the symbol $\partial$ for denoting an ordinary derivative. It is important to note that the L1 method only achieves its optimal order $2-\alpha$ for \emph{twice continuously differentiable} functions. When the regularity is less than that, the order deteriorates to $1$. This is especially evident when applied to solving PDEs, as was previously observed in the literature \cite{Kop19} and we will indicate this in the sequel. 

In the literature, there are several results concerning bounds on the error constant $C_n$ in (\ref{eqn:L1Order}). For instance, in \cite{Lin07} it was found that 
\begin{equation}
C_n \leq \frac{\zeta(1+\alpha)}{\Gamma(2-\alpha)},
\end{equation}
where $\zeta$ is the Riemann zeta function. As can be seen, this estimate can be very large for $\alpha$ close to $0$. Some more accurate bounds were given in \cite{Li19a} (p. 106, Theorem 4.1)
\begin{equation}
\label{eqn:L1SchemeCai}
C_n \leq \frac{1}{\Gamma(2-\alpha)} \left(\frac{1-\alpha}{12} + \frac{2^{2-\alpha}}{2-\alpha} - \left(2^{1-\alpha}+1\right)\right).
\end{equation}
In the results below, we find the optimal sharp estimate on $C_n$.

\begin{thm}\label{thm:L1Scheme}
Let $y\in C^2[0,T]$ and $0<\alpha<1$. Then, for any $t\in (0,T]$ the constant $C_n$ defined in (\ref{eqn:L1Order}) satisfies
\begin{equation}
	\label{eqn:L1SchemeOptimalC}
	C_n \sim -\frac{\zeta(\alpha-1)}{\Gamma(2-\alpha)} - \frac{1}{12\Gamma(1-\alpha)}\frac{1}{n^\alpha} \quad \text{as} \quad n\rightarrow\infty, \quad n h \rightarrow t,
\end{equation}
where $\sim$ denotes the asymptotic equality. 
\end{thm}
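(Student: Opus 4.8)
The plan is to compute the exact local truncation error of the L1 scheme for a smooth function and then extract its asymptotic size as $n\to\infty$ with $nh\to t$. First I would write $\partial^\alpha y(t_n) - \delta^\alpha y(t_n)$ as a sum of local integrals over the subintervals $[t_i,t_{i+1}]$: on each such interval the Caputo integrand involves $(t_n-s)^{-\alpha}y'(s)$, while the L1 scheme replaces $y$ by its piecewise linear interpolant, i.e.\ replaces $y'(s)$ by the divided difference $(y(t_{i+1})-y(t_i))/h$. Hence the error on each subinterval is $\frac{1}{\Gamma(1-\alpha)}\int_{t_i}^{t_{i+1}}(t_n-s)^{-\alpha}\bigl(y'(s)-\tfrac{y(t_{i+1})-y(t_i)}{h}\bigr)ds$. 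Using the standard integral representation of the linear-interpolation error, $y'(s)-\frac{y(t_{i+1})-y(t_i)}{h}$ can be written via $y''$ against a kernel supported on $[t_i,t_{i+1}]$; after summing over $i$ this yields an exact formula for $\epsilon_n(h)$ of the form $h^{2-\alpha}$ times a weighted combination of values/averages of $y''$, from which the sharp constant $C_n$ is read off as the total mass of the (nonnegative) weight when one bounds $|y''|$ by its maximum.

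Second, I would identify $C_n$ explicitly as a finite sum: carrying out the two integrations, the contribution of the $i$-th subinterval contributes a weight that is a combination of terms like $(n-i)^{2-\alpha}$, $(n-i-1)^{2-\alpha}$ and $(n-i)^{1-\alpha}$, $(n-i-1)^{1-\alpha}$ — essentially the same structure already visible in the $2^{2-\alpha}$, $2^{1-\alpha}$ terms of the earlier bound (\ref{eqn:L1SchemeCai}). Concretely $C_n$ should come out (up to the $1/\Gamma$ prefactors) as a telescoping-plus-power sum, and the key object is $\sum_{j=1}^{n} j^{1-\alpha}$ type partial sums. The main work is then asymptotic: I would invoke the Euler–Maclaurin expansion for $\sum_{j=1}^{n} j^{s}$, which gives $\sum_{j=1}^n j^{s} = \frac{n^{s+1}}{s+1} + \frac{n^s}{2} + \frac{s\,n^{s-1}}{12} + \cdots - \zeta(-s)$, the constant term $-\zeta(-s)$ being exactly where the Riemann zeta function enters (by analytic continuation of $\zeta$). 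Matching $s=1-\alpha$ makes $-\zeta(-s)=-\zeta(\alpha-1)$ appear, and collecting the surviving non-cancelling terms to the first two orders produces the claimed leading term $-\zeta(\alpha-1)/\Gamma(2-\alpha)$ and the first correction $-\frac{1}{12\Gamma(1-\alpha)}n^{-\alpha}$; the intermediate powers of $n$ must cancel, which is the consistency check that the scheme is genuinely order $2-\alpha$.

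The step I expect to be the main obstacle is the bookkeeping that shows the unwanted intermediate-order terms cancel and that what remains assembles into precisely $-\zeta(\alpha-1)$ rather than some other combination — in particular handling the boundary subintervals ($i=0$ and $i=n-1$) separately, since near $s=t_n$ the factor $(t_n-s)^{-\alpha}$ is singular and the linear-interpolation remainder representation there needs care, and the $i=0$ term carries the $b_n y(0)$ piece. A secondary subtlety is justifying that the weight defining $C_n$ is nonnegative, so that $\max|y''|$ is the sharp multiplier and $C_n$ is truly optimal (not merely an upper bound); this likely follows from the sign of the linear-interpolation kernel combined with positivity of $(t_n-s)^{-\alpha}$, but must be stated. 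Once the exact sum for $C_n$ is in hand and Euler–Maclaurin is applied with error control on the remainder (uniform in $n$ for $nh\to t$ fixed), the asymptotic equivalence (\ref{eqn:L1SchemeOptimalC}) follows, and one notes that $-\zeta(\alpha-1)>0$ for $\alpha\in(0,1)$ so the leading constant is genuinely positive and strictly smaller than the earlier bounds for $\alpha$ near $0$.
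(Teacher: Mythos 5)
Your proposal follows essentially the same route as the paper: expand the interpolation error locally in terms of $y''$, compute the resulting weights explicitly, verify their nonnegativity so the mean value theorem gives a genuine (sharp) multiplier of $y''$, reduce the sum to the generalized harmonic number $\sum_{j=1}^n j^{1-\alpha}$, and read off the constant $\zeta(\alpha-1)$ and the $n^{-\alpha}$ correction from its Euler--Maclaurin expansion after the $n^{2-\alpha}$ and $n^{1-\alpha}$ terms cancel. Two small cautions: the constant term in that expansion is $+\zeta(-s)$, not $-\zeta(-s)$ (the overall minus sign in the result comes from the negative coefficient multiplying the harmonic sum), and the positivity of the weights does not follow directly from kernel signs --- the kernel $s-(t_{i-1}+t_i)/2$ changes sign on each subinterval, so one needs the explicit Taylor estimate of the integrated coefficients that the paper carries out.
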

\begin{proof}
For any $t\in (0,T]$ we have a sequence $\left\{t_n\right\}$ converging to $t$ as $n\rightarrow\infty$ and $nh\rightarrow t$. It follows that $h\rightarrow 0^+$. Fix $t_n$ and subdivide the interval $[0,t_n]$ into pieces of length $h$.  Then interpolating $y$ with a linear function on each subinterval $[t_{i-1}, t_i]$ we have
\begin{equation}
	y(s) = y(t_{i-1}) + \frac{y(t_i) - y(t_{i-1})}{h}(s-t_{i-1}) + \frac{1}{2} y''(t_{i-1}) (s-t_{i-1})(s-t_i) + O(h^3), \quad s\in [t_{i-1}, t_i],
\end{equation}
which leads to
\begin{equation}
	\label{eqn:L1SchemeDerInterp}
	y'(s) = \frac{y(t_i) - y(t_{i-1})}{h}+ y''(t_{i-1}) \left(s- \frac{t_{i-1} + t_i}{2}\right) + O(h^2),
\end{equation}
as $h\rightarrow 0^+$. Plugging this expression into (\ref{eqn:Caputo}) yields the following
\begin{equation}
	\label{eqn:L1SchemeRemainder}
	\begin{split}
		\partial^\alpha y(t_n) 
		&= \frac{h^{-1}}{\Gamma(1-\alpha)} \sum_{i=1}^n (y(t_i) - y(t_{i-1})) \int_{t_{i-1}}^{t_i} (t_n-s)^{-\alpha} ds \\
		&- \frac{1}{\Gamma(1-\alpha)} \sum_{i=1}^n \int_{t_{i-1}}^{t_i} (t_n-s)^{-\alpha} \left(s- \frac{t_{i-1} + t_i}{2}\right)y''(t_{i-1}) + O(h^2) ds \\
		&=: \delta^\alpha y(t_n) - R_n(h) + o(R_n(h)),
	\end{split}
\end{equation}
where we defined the remainder with $h\rightarrow 0^+$. We claim that $R_n(h) = O(h^{2-\alpha})$, which by a simple integration would mean that the term $o(R_n(h))$ above is $O(h^{3-\alpha})$. First, by a straightforward calculation we have
\begin{equation}
	\begin{split}
		&c_{n-i} := \int_{t_{i-1}}^{t_i} (t_n-s)^{-\alpha} \left(s- \frac{t_{i-1} + t_i}{2}\right) ds \\
		&=h^{2-\alpha} \left[\frac{1}{1-\alpha}\left((n-i+1)^{1-\alpha}-(n-i)^{1-\alpha}\right)\left(n-i+\frac{1}{2}\right) - \frac{1}{2-\alpha}\left((n-i+1)^{2-\alpha}-(n-i)^{2-\alpha}\right)\right]
	\end{split}	 
\end{equation}
To sum up the terms $c_{n-i}$ in the definition of $R_n(h)$ in (\ref{eqn:L1SchemeRemainder}) we would like to use the mean value theorem to take $y''(t_{i-1})$ outside the sum (we can move it outside the integral since it is $s$-independent). However, to invoke it, we have to ascertain that $c_i$ is positive for each $i = 1, ..., n$. This is not obvious since the linear function in the integrand, i.e. $s-(t_{i-1}+t_i)/2$ changes sign. To proceed, we write the coefficients differently
\begin{equation}
	c_{n-i} = (h (n-i))^{2-\alpha} \left[\frac{1}{1-\alpha}\left(\left(1+\frac{1}{n-i}\right)^{1-\alpha}-1\right)\left(1+\frac{1}{2(n-i)}\right)-\frac{1}{2-\alpha}\left(\left(1+\frac{1}{n-i}\right)^{2-\alpha}-1\right)\right].
\end{equation}
Now, introducing an auxiliary function
\begin{equation}
	g(x) := \frac{1}{1-\alpha}\left(\left(1+\frac{1}{x}\right)^{1-\alpha}-1\right)\left(1+\frac{1}{2x}\right)-\frac{1}{2-\alpha}\left(\left(1+\frac{1}{x}\right)^{2-\alpha}-1\right),
\end{equation}
we can use the Taylor series for $x^{-1} \rightarrow 0$ to obtain the estimate
\begin{equation}
	g(x) \geq \frac{\alpha}{12 x^3} - \frac{\alpha(1+\alpha)}{24 x^4} = \frac{\alpha}{12 x^3}\left(1-\frac{1+\alpha}{2x}\right) \geq 0,
\end{equation}
for $x\geq 1$. This shows that $c_{n-i} \geq 0$ for $i = 1, ..., n-1$. The last term can be inspected by hand
\begin{equation}
	c_0 = h^{2-\alpha} \left(\frac{1}{2}\frac{1}{1-\alpha} -\frac{1}{2-\alpha} \right) =  \frac{\alpha h^{2-\alpha}}{2(1-\alpha)(2-\alpha)} > 0.
\end{equation}
Therefore, by the mean value theorem with some $\tau_n\in(0,t_n)$ we can write
\begin{equation}
	\begin{split}
		R_n(h) 
		&= \frac{h^{2-\alpha}}{\Gamma(1-\alpha)} y''(\tau_n) \left[\frac{1}{1-\alpha} \sum_{i=1}^n \left((n-i+1)^{1-\alpha}-(n-i)^{1-\alpha}\right)\left(n-i+\frac{1}{2}\right) \right.\\
		&\left.-\frac{1}{2-\alpha} \sum_{i=1}^n \left((n-i+1)^{2-\alpha}-(n-i)^{2-\alpha}\right) \right] \\
		&=: \frac{h^{2-\alpha}}{\Gamma(1-\alpha)} y''(\tau_n) \left[S_1+S_2\right].
	\end{split}
\end{equation}
We will evaluate the two sums above. First, the second is telescoping.
\begin{eqnarray}
	S_2 = -\frac{n^{2-\alpha}}{2-\alpha}. 
\end{eqnarray}
The first one, in turn, can be simplified with the summation by parts (see, for example, \cite{Knu89}). To this end, define the two sequences $f_i$ and $g_i$ as follows
\begin{equation}
	S_1 = \frac{1}{1-\alpha} \sum_{i=1}^n \left((n-i+1)^{1-\alpha}-(n-i)^{1-\alpha}\right)\left(n-i+\frac{1}{2}\right) =: -\frac{1}{1-\alpha} \sum_{i=1}^n (f_i-f_{i-1}) g_i. 
\end{equation}
Then, the summation by parts formula yields
\begin{equation}
	S_1=-\frac{1}{1-\alpha} \left(g_n f_n - g_1 f_0 -\sum_{i=1}^{n-1} f_i (g_{i+1} - g_i) \right) = \frac{1}{1-\alpha}\left(\left(n-\frac{1}{2}\right) n^{1-\alpha} - \sum_{i=1}^{n-1} i^{1-\alpha}\right).
\end{equation}
Putting the values of $S_1$ and $S_2$ together, we arrive at the following
\begin{equation}
	R_n(h) = \frac{h^{2-\alpha}}{\Gamma(1-\alpha)} y''(\tau_n) \left[-\frac{1}{2-\alpha}n^{2-\alpha} + \frac{n+\frac{1}{2}}{1-\alpha}n^{1-\alpha} - \frac{1}{1-\alpha} \sum_{i=1}^{n} i^{1-\alpha} \right],
\end{equation}
where we have added and subtracted the $n$-th term in the sum above to recognize it as the harmonic number of order $\alpha-1$. The last step is to use its asymptotic form (see \cite{Edw74}, Chapter 6.4). For $n\rightarrow\infty$, we have
\begin{equation}
	\label{eqn:HarmonicNumberAsymptotics}
	\sum_{i=1}^{n} i^{1-\alpha} \sim \zeta(\alpha-1) +\frac{1}{2-\alpha} n^{2-\alpha} + \frac{1}{2} n^{1-\alpha} + \frac{1-\alpha}{12} n^{-\alpha}. 
\end{equation} 
Therefore, as $n\rightarrow\infty$ we have
\begin{equation}
	\begin{split}
		R_n(h)
		&\sim \frac{h^{2-\alpha}}{\Gamma(1-\alpha)} y''(\tau_n) \left[-\frac{1}{2-\alpha}n^{2-\alpha} + \frac{n+\frac{1}{2}}{1-\alpha}n^{1-\alpha} \right.\\ 
		&\left.- \frac{1}{1-\alpha} \left(\zeta(\alpha-1) +\frac{1}{2-\alpha} n^{2-\alpha} + \frac{1}{2} n^{1-\alpha} + \frac{1-\alpha}{12} \frac{1}{n^{\alpha}}\right) \right] \\
		&= -\frac{h^{2-\alpha}}{\Gamma(2-\alpha)} y''(\tau_n) \left[\zeta(\alpha-1) + \frac{1-\alpha}{12} \frac{1}{n^\alpha} \right],
	\end{split}
\end{equation}
which is the desired result. 
\end{proof}
Due to the asymptotic form of the constant $C_n$ as in (\ref{eqn:L1SchemeOptimalC}) its leading order value is the smallest possible. For example, the comparison with the value from \cite{Li19a}, that is, (\ref{eqn:L1SchemeCai}), is presented on the left of Fig. \ref{fig:L1Scheme}. We note that the values are close for all $\alpha\in(0,1)$. This can also be verified numerically by choosing $y(t) = t^2/2$ so that $y''(t) = 1$. For $T=1$ we have $h = n^{-1}$ and plot the value of
\begin{equation}
\label{eqn:L1SchemeRho}
\rho_n := 12 \Gamma(1-\alpha) n^\alpha \left|\frac{\partial^\alpha y(1) - \delta^\alpha y(t_n)}{\frac{1}{n^{2-\alpha}}}+\frac{\zeta(\alpha-1)}{\Gamma(2-\alpha)}\right|
\end{equation}
for different $n$. According to (\ref{eqn:L1SchemeOptimalC}) the above expression should approach $1$ as $n\rightarrow \infty$. The results of our calculations are depicted on the right of Fig. \ref{fig:L1Scheme}. As we can see, all cases converge to $1$ very quickly, even for $\alpha$ close to $1$. Note also the scale of ticks on the y-axis. 

\begin{figure}
\centering
\includegraphics[scale = 0.75]{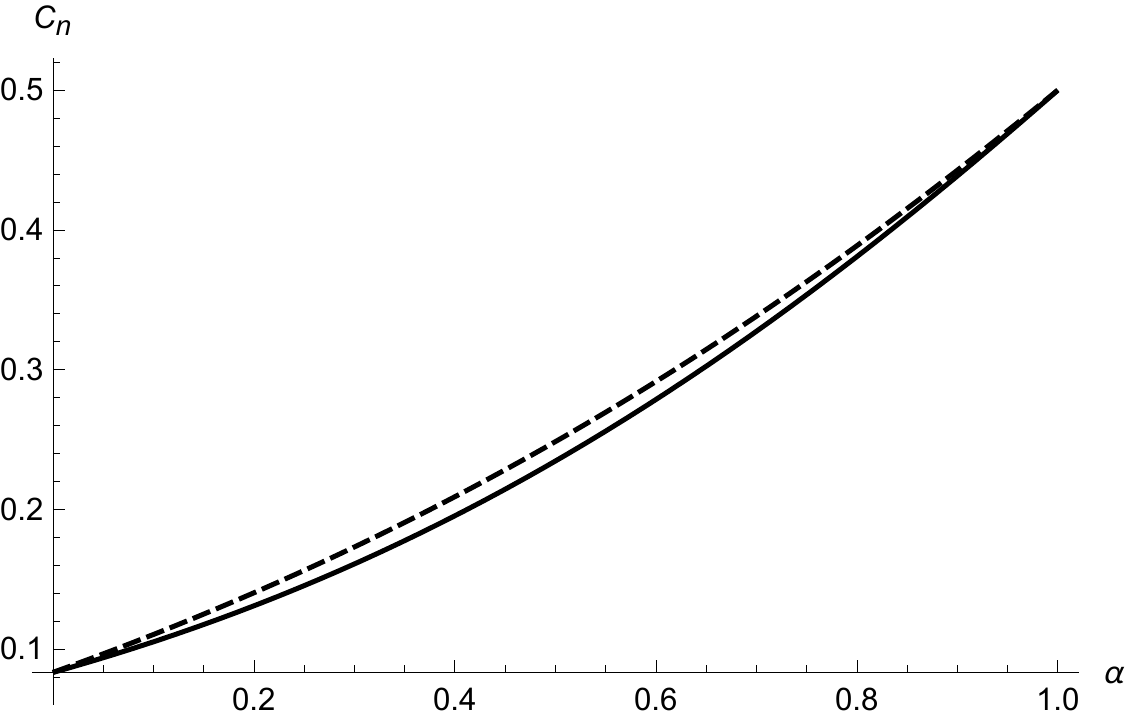}
\includegraphics[scale = 0.75]{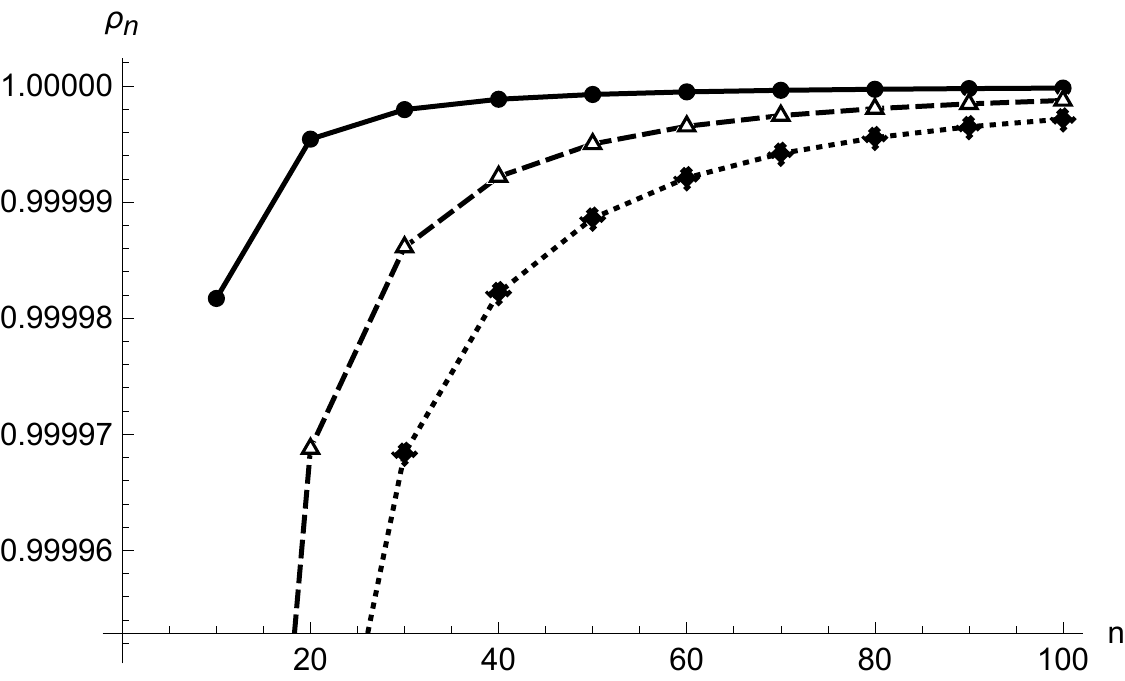}
\caption{On the left: comparison of our result (\ref{eqn:L1SchemeOptimalC}) (solid line) with the bound found in \cite{Li19a} (dashed line). On the right: the value of $\rho_n$ as in (\ref{eqn:L1SchemeRho}) for different $n$ and $\alpha = 0.1$ (solid line, circles), $\alpha = 0.5$ (dashed line, triangles), and $\alpha=0.9$ (dotted line, crosses). }
\label{fig:L1Scheme}
\end{figure}

\begin{rem}
	\label{rem:Truncation}
	The above theorem concerns only the regular and smooth case of $y$ being a $C^2[0,T]$ function and a uniform mesh. When lower regularity is assumed, only a slower order of convergence can be attained. For example, an important regularity assumption is the continuity with singular behavior of derivatives. That is, provided that $|y^{(m)}(t)|\leq C(1+t^{\alpha-m})$ for $m=0,1,2$, the following estimate of the truncation error was proved in \cite{stynes2017error} 
	\begin{equation}
		\epsilon_n(h) \leq C n^{-\min\{2-\alpha,r \alpha\}}, \quad t_n = T \left(\frac{n h}{T}\right)^r,
	\end{equation}
	Note that it gives a general case for a graded mesh. Our case, the uniform grading, corresponds to $r = 1$. Observe that by choosing an appropriate mesh, we can obtain high orders of convergence. Specifically, choosing $r = (2-\alpha)/\alpha$ yields an optimal grading in which the order is the same as in the smooth case, that is, $2-\alpha$. However, in that case, the mesh becomes clustered for small $\alpha$.
	
	Note also that this is independent of $h$. We will use this estimate below in the convergence proof. 
\end{rem}

In exactly the same spirit, we can conduct an analysis for the remainder of the discretization of the fractional integral (\ref{eqn:FracInt}). However, the result is of a different nature. 

\begin{prop}
Let $y\in C^1[0,T]$ and $0<\alpha<1$. Then for any $t_n\rightarrow t\in(0,T]$ as $n\rightarrow \infty$ we have
\begin{equation}
	I^\alpha y(t_n) = J^\alpha y(t_n) +  h \, D_n y'(\tau),
\end{equation}
where the discretized fractional integral is
\begin{equation}
	\label{eqn:FracIntDisc}
	J^\alpha y(t_n) := \frac{h^\alpha}{\Gamma(1+\alpha)} \sum_{i=1}^n b_{n-i}(\alpha) y(t_i)
\end{equation}
with some $\tau \in (0,T]$ and the weights $b_i$ defined in (\ref{eqn:Weights}). Moreover, we have the asymptotic expansion
\begin{equation}
	\label{eqn:FracIntErrorConstant}
	D_n \sim -\frac{t^\alpha}{\Gamma(1+\alpha)} \left(\frac{1}{2} +\frac{\zeta(-\alpha)}{n^\alpha} + \frac{\alpha}{12n}\right) \quad \text{as} \quad n \rightarrow\infty, \quad h \rightarrow 0.
\end{equation}
\end{prop}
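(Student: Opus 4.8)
The plan is to mimic the argument of Theorem~\ref{thm:L1Scheme}, but now starting from the representation of the fractional integral rather than the Caputo derivative, and keeping one fewer order in the Taylor expansion since $y$ is only $C^1$. First I would fix $t_n$, subdivide $[0,t_n]$ into subintervals of length $h$, and on each $[t_{i-1},t_i]$ replace $y(s)$ by its value at one endpoint plus the linear remainder, i.e. write $y(s) = y(t_i) + y'(\eta_i(s))(s-t_i)$ or, more usefully for matching the quadrature weights in \eqref{eqn:FracIntDisc}, expand around $t_i$ so that the leading term reproduces the right-rectangle rule $\sum_i y(t_i)\int_{t_{i-1}}^{t_i}(t_n-s)^{\alpha-1}ds$, which after evaluating the elementary integral is exactly $J^\alpha y(t_n)$ because $\int_{t_{i-1}}^{t_i}(t_n-s)^{\alpha-1}ds = h^\alpha\big((n-i+1)^\alpha-(n-i)^\alpha\big)/\alpha = h^\alpha b_{n-i}(\alpha)/\alpha$. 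The remainder is then $E_n(h) := \frac{1}{\Gamma(\alpha)}\sum_{i=1}^n \int_{t_{i-1}}^{t_i}(t_n-s)^{\alpha-1}(s-t_i)\,y'(s)\,ds$ (writing the exact first-order remainder rather than a Taylor coefficient, so no extra smoothness is needed).

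The second step is to pull $y'$ out of the sum via the mean value theorem for integrals. As in the paper this requires a sign condition: here the kernel factor $(t_n-s)^{\alpha-1}$ is positive, but $(s-t_i)\le 0$ on $[t_{i-1},t_i]$, so each coefficient $d_{n-i} := \int_{t_{i-1}}^{t_i}(t_n-s)^{\alpha-1}(s-t_i)\,ds$ has a fixed sign (nonpositive) \emph{without} needing the delicate auxiliary-function estimate $g(x)\ge 0$ that was required in the L1 case — this is the one place the integral version is genuinely easier. Hence $E_n(h) = y'(\tau)\cdot \frac{1}{\Gamma(\alpha)}\sum_{i=1}^n d_{n-i}$ for some $\tau\in(0,t_n)\subset(0,T]$, and I would set $h D_n := \frac{1}{\Gamma(\alpha)}\sum_{i=1}^n d_{n-i}$, i.e. $D_n = h^{-1}\Gamma(\alpha)^{-1}\sum d_{n-i}$.

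The third step is the explicit computation and asymptotics of $\sum_{i=1}^n d_{n-i}$. A direct calculation gives $d_{n-i} = h^{1+\alpha}\big[\frac{1}{\alpha}\big((n-i+1)^\alpha-(n-i)^\alpha\big)(n-i+1) - \frac{1}{1+\alpha}\big((n-i+1)^{1+\alpha}-(n-i)^{1+\alpha}\big)\big]$ (after shifting the linear factor $s-t_i$ to $s-t_n+(n-i)h$ and integrating termwise). Summing over $i$: the second piece telescopes to $-\frac{1}{1+\alpha}n^{1+\alpha}$, and the first piece I would handle by summation by parts exactly as in the proof of Theorem~\ref{thm:L1Scheme}, reducing it to $n\cdot n^\alpha$ minus a sum $\sum_{j=1}^{n-1} j^\alpha$ (possibly after reindexing), which is a harmonic number of order $-\alpha$. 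I then feed in the Euler–Maclaurin asymptotics $\sum_{j=1}^n j^\alpha \sim \zeta(-\alpha) + \frac{1}{1+\alpha}n^{1+\alpha} + \frac12 n^\alpha + \frac{\alpha}{12}n^{\alpha-1}$ (the analogue of \eqref{eqn:HarmonicNumberAsymptotics} with $1-\alpha$ replaced by $\alpha$). Collecting terms, the $n^{1+\alpha}$ contributions cancel, leaving $\sum d_{n-i} \sim h^{1+\alpha}\big(-\frac12 n^\alpha - \zeta(-\alpha) - \frac{\alpha}{12}n^{\alpha-1}\big)$; multiplying by $h^{-1}\Gamma(\alpha)^{-1}$ and using $h^\alpha n^\alpha = (nh)^\alpha \to t^\alpha$ together with $\alpha\Gamma(\alpha) = \Gamma(1+\alpha)$ yields $D_n \sim -\frac{t^\alpha}{\Gamma(1+\alpha)}\big(\frac12 + \frac{\zeta(-\alpha)}{n^\alpha} + \frac{\alpha}{12 n}\big)$, as claimed.

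The main obstacle I anticipate is purely bookkeeping: getting the summation-by-parts step right (choosing the correct $f_i$, $g_i$, and the boundary terms) so that the $n^{1+\alpha}$ and $n^\alpha$ coefficients match, and correctly lining up the Euler–Maclaurin remainder $\frac{\alpha}{12}n^{\alpha-1}$ with the stated $\frac{\alpha}{12n}$ factor (note $h^{1+\alpha}n^{\alpha-1}\cdot h^{-1} = h^{\alpha-1}n^{\alpha-1} = (nh)^{\alpha-1}n^{-1}\cdot$\,\dots, so one must be careful that the trailing term really produces $t^\alpha/n$ rather than $t^{\alpha-1}$). There is no analytic difficulty comparable to the positivity lemma of the L1 proof; the novelty, as the paper emphasizes, is only that the result is of a different \emph{nature} — the error is $O(h)$ with a constant that does not vanish as $h\to0$ after rescaling, reflecting that right-rectangle quadrature of a weakly singular integral is genuinely first order.
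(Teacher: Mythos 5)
Your proposal is correct and follows essentially the same route as the paper: right-endpoint Taylor expansion reproducing the rectangle rule, extraction of $y'(\tau)$ by the mean value theorem using the fixed sign of $(t_n-s)^{\alpha-1}(s-t_i)$, telescoping plus reduction to the generalized harmonic number $\sum_{j} j^\alpha$, and the Euler--Maclaurin asymptotics (the paper rearranges $n-i-\alpha=(n-i+1)-(1+\alpha)$ rather than summing by parts, but this is the same algebra). The only blemishes are bookkeeping slips you yourself flag: your displayed coefficient should carry $(n-i)$ rather than $(n-i+1)$ (consistent with your own parenthetical $s-t_i=s-t_n+(n-i)h$), and an overall factor $1/\alpha$ is dropped in the intermediate asymptotic for $\sum d_{n-i}$ but implicitly restored when you invoke $\alpha\Gamma(\alpha)=\Gamma(1+\alpha)$, so the final formula comes out right.
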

\begin{proof}
By the same subdivision of the $(0,t_n)$ interval as in the above proof and by a simple Taylor expansion in each subinterval $(t_{i-1},t_i)$ 
\begin{equation}
	y(s) = y(t_i) + y'(\xi_i) (s-t_i),	\quad s,\xi_i \in  (t_{i-1},t_i),
\end{equation}
we can immediately write
\begin{equation}
	I^\alpha y(t_n) = \frac{h^\alpha}{\Gamma(\alpha)} \sum_{i=1}^n b_{n-i}(\alpha) y(t_i) + \frac{1}{\Gamma(\alpha)} \sum_{i=1}^n\int_{t_{i-1}}^{t_i} (t_n - s)^{\alpha-1} (s-t_i) y'(\xi_i) ds.
\end{equation}
Since the integrand is negative we can use the mean value theorem and take the derivative first outside the integral and then outside, which produces some $\tau \in (0,T]$. Further, by a simple calculation, we have
\begin{equation}
	\int_{t_{i-1}}^{t_i} (t_n - s)^{\alpha-1} (s-t_i) ds = -\frac{h^{1+\alpha}}{\alpha(1+\alpha)} \left((n-i)^{1+\alpha} - (n-i-\alpha)(n-i+1)^\alpha\right),
\end{equation}
and hence our error constant $D_n$ is
\begin{equation}
	D_n = \frac{h^\alpha}{\Gamma(2+\alpha)} \sum_{i=1}^n \left((n-i)^{1+\alpha} - (n-i-\alpha)(n-i+1)^\alpha\right).
\end{equation}
We can write $n - i - \alpha = (n-i + 1) - (1+\alpha)$ which yields
\begin{equation}
	D_n = \frac{h^\alpha}{\Gamma(\alpha)} \left[ \sum_{i=1}^n \left((n-i)^{1+\alpha} - (n-i+1)^{1+\alpha}\right) + (1+\alpha)  \sum_{i=1}^n (n-i+1)^\alpha \right]. 
\end{equation}
The first sum is telescoping, while in the second, we can introduce another summation variable $j = n-i+1$
\begin{equation}
	D_n = \frac{h^\alpha}{\Gamma(2+\alpha)} \left[ -n^{1+\alpha} + (1+\alpha)  \sum_{i=1}^n j^\alpha \right],
\end{equation}
which brings us to the harmonic number of order $-\alpha$. Using its asymptotic form as in (\ref{eqn:HarmonicNumberAsymptotics}) leads to the following
\begin{equation}
	D_n \sim \frac{h^\alpha}{\Gamma(2+\alpha)} \left[ -n^{1+\alpha} + n^{1+\alpha} + \frac{1+\alpha}{2} n^\alpha + \frac{\alpha(1+\alpha)}{12}\frac{1}{n^{1-\alpha}}  + (1+\alpha)\zeta(-\alpha)\right], 
\end{equation}
as $n\rightarrow \infty$. Since also $n h \rightarrow t$ the conclusion follows. 
\end{proof}
As we can see, the remainder of the simple discretization of the fractional integral is of integer order. For a numerical illustration of the above, we choose $t = T = 1$ and $y(t) = t$ for the first integral to be constant. In Fig. \ref{fig:FracInt} we plotted the value of
\begin{equation}
\label{eqn:rhonTilde}
\widetilde{\rho}_n := -\left|\frac{I^\alpha y(1) - J^\alpha y(t_n)}{\frac{1}{n}} + \frac{1}{2\Gamma(1+\alpha)}\right| \left(\frac{\zeta(-\alpha)}{n^\alpha} + \frac{\alpha}{12}\right)^{-1},
\end{equation}
for different values of $\alpha$. According to (\ref{eqn:FracIntErrorConstant}) the above expression should converge to $1$ with increasing $n$. As we can see, the convergence is very fast for the entire range of $\alpha \in (0,1)$. We can also observe that for $\alpha \rightarrow 1^-$ the two last terms in (\ref{eqn:FracIntErrorConstant}) cancel each other (since $\zeta(-1) = -1/12$), and the expansion becomes exact (not asymptotic). Furthermore, we can see that the order of discretization is independent of $\alpha$ and equals $1$ in contrast to $2-\alpha$ of the L1 scheme for the Caputo derivative. 

\begin{figure}
\centering
\includegraphics[scale = 1.2]{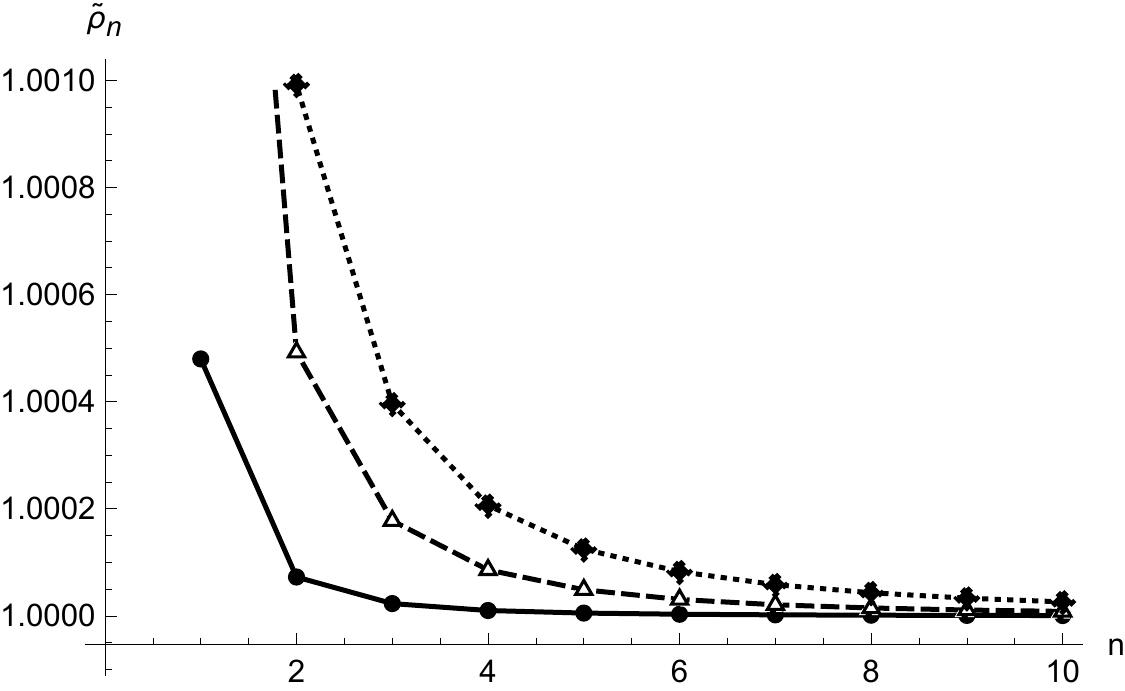}
\caption{The value of (\ref{eqn:rhonTilde}) as a function of $n$ for $\alpha = 0.1$ (solid line, circles), $\alpha = 0.5$ (dashed line, triangles), and $\alpha = 0.9$ (dotted line, crosses)}
\label{fig:FracInt}
\end{figure}

\section{Galerkin method}
Having prepared the discretization of the Caputo derivative, we proceed to the main theme of this paper. First, we set the framework. Throughout the following, a generic constant that can depend on $u$ and all of its derivatives will be denoted by $C$. We allow it to change its value even in the same chain of inequalities. We will work in the $L^2(0,1)$ Hilbert space endowed with the standard norm $\left\|\cdot\right\|$. The general Sobolev space of $m$-th differentiable will be denoted by $H^m(0,1)$ and we equip it with the usual norm
\begin{equation}
\left\|f\right\|_m = \left(\sum_{i=0}^m \left\|\frac{\partial^i f }{\partial x^i}\right\|^2\right)^\frac{1}{2}
\end{equation}
Due to the Poincar\'e-Friedrichs inequality, the space $H_0^1(0,1)\subseteq H^1(0,1)$ of functions with vanishing trace is also a Sobolev space with the norm $\left\|\cdot\right\|_1$. In what follows, we usually will omit denoting the interval $(0,1)$ when addressing a particular space, i.e. we will write $H^m$ instead of $H^m(0,1)$. 

In order to propose a Galerkin method, we recast the main equation (\ref{eqn:MainPDE}) into its weak form. By multiplying it by the test function $v\in H_0^1$ and integrating by parts, we obtain
\begin{equation}
\label{eqn:MainPDEWeak}
\left(\partial^\alpha_t u, v\right) + a(D(u); u, v) = (f(t,u), v), \quad v \in H_0^1,
\end{equation}
with the initial condition
\begin{equation}
u(0) = \varphi. 
\end{equation}
Here, the functional $a$ is defined as
\begin{equation}
\label{eqn:FunctionalA}
a(D(w);u,v) = \int_0^1 D(w(x)) u_x(x) v_x(x) dx. 
\end{equation}
For well-posedness, we assume boundedness and smoothness of $D$ and $f$
\begin{equation}
\label{eqn:Assumptions}
0<D_- \leq D(u) \leq D_+, \quad |f(x, t, u)| \leq F, \quad |D'(u)| + |f_u(x,t,u)| \leq L, 
\end{equation}
where $D_\pm$, $F$, and $L$ are positive constants. We will seek an approximate solution of (\ref{eqn:MainPDEWeak}) in a suitable $N$-dimensional space $V_N\subseteq H_0^1$. Furthermore, we assume that $V_N$ is either a trigonometric or algebraic polynomial space in order to utilize spectral accuracy. More specifically, if $P_N$ is the $L^2$ projection onto $V_N$ we have \cite{Can07}
\begin{equation}
\left\|u - P_n u\right\| \leq C N^{-m} \left\|u\right\|_m, \quad \left\|u - P_n u\right\|_l \leq C N^{2l-\frac{1}{2}-m}  \left\|u\right\|_m, \quad u \in H^m, \quad m\geq 1, \quad 1\leq l\leq m.
\end{equation}
Therefore, for an infinitely smooth function, the spectral approximation converges faster than any power of $N$. We also assume that our finite-dimensional space enjoys a form of inverse inequality for norms and for derivatives
\begin{equation}
\label{eqn:InverseInequality}
\left\|v_x\right\|_\infty \leq C N^\mu \left\|v_x\right\| \leq C N^{\mu+\nu} \left\|v\right\|, \quad \nu,\mu > 0.	
\end{equation} 
For example, for the Legendre polynomial space, we have $\nu = 1$ and $\mu = 2$ (for a comprehensive discussion, see \cite{Can07}). Further, instead of the above orthogonal projection, we will mainly use the more useful Ritz projection $R_N u$ defined as a unique solution of the following linear elliptic problem (see \cite{Tho07})
\begin{equation}
\label{eqn:RitzProjection}
a(D(u); R_N u  - u, v) = 0, \quad v \in V_N,
\end{equation} 
where $u$ is given. It is also a well-known fact that $R_N$ enjoys the same spectral accuracy as $P_N$
\begin{equation}
\label{eqn:RitzProjectionAccuracy}
\left\|u - R_N u\right\| \leq C N^{-m} \left\|u\right\|_m, \quad \left\|u - R_N u\right\|_l \leq C N^{2l-\frac{1}{2}-m}  \left\|u\right\|_m, \quad u \in H^m.
\end{equation}
This fact can be seen by adapting the classical reasoning for the finite element method to the spectral setting (see for ex. \cite{Tho07}). Moreover, using the inverse inequalities (\ref{eqn:InverseInequality}) and the above accuracy estimates we can show that for $u\in H^m$ with $m$ sufficiently large, we have \cite{Tho07,Plo21}
\begin{equation}
\label{eqn:RitzInftyEstimate}
\begin{split}
	\left\|(R_N u)_x\right\|_\infty 
	&\leq \left\|(R_N u -u)_x\right\|_\infty + \left\|u_x\right\|_\infty \leq C N^{\mu+\nu} \left\|R_N u -u\right\| + \left\|u_x\right\|_\infty \\
	&\leq C N^{\mu+\nu-m} \left\|u\right\|_m + \left\|u_x\right\|_\infty \leq C \left\|u_x\right\|_\infty, 
\end{split}
\end{equation}
which will be useful in the sequel. Furthermore, concerning the interaction of the Caputo derivative and the $L^2$ scalar product, we have the following result. It was previously proved, for example, in \cite{Li18}, but here we present a different proof for the continuous case. 
\begin{prop}\label{prop:CaputoScalar}
If $y^n \in L^2$ for any $n\in\mathbb{N}$, then
\begin{equation}
	\label{eqn:CaputoScalar}
	\left(\delta^\alpha y^n, y^n\right) \geq \frac{1}{2} \delta^\alpha \left\|y^n\right\|^2.
\end{equation}
Similarly, for $y \in C([0,T]; L^2)$ we have
\begin{equation}
	\left(\partial_t^\alpha y(t), y(t)\right) \geq \frac{1}{2} \partial^\alpha \left\|y(t)\right\|^2.
\end{equation}
\end{prop}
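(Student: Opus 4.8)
The plan is to handle the discrete and the continuous assertions by one and the same device — a discrete (respectively continuous) ``summation by parts'' against the telescoped square of the antiderivative of the increments — so I will set up the discrete case in detail and then run the integral analogue. For the discrete estimate, write $c := h^{-\alpha}/\Gamma(2-\alpha)>0$ and $a_j := b_j(1-\alpha) = j^{1-\alpha}-(j-1)^{1-\alpha}$, so that $\delta^\alpha y^n = c\sum_{i=0}^{n-1} a_{n-i}(y^{i+1}-y^i)$ and, applying the very same operator to the real sequence $i\mapsto\|y^i\|^2$, $\delta^\alpha\|y^n\|^2 = c\sum_{i=0}^{n-1} a_{n-i}(\|y^{i+1}\|^2-\|y^i\|^2)$. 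Subtracting and using $\|y^{i+1}\|^2-\|y^i\|^2 = (y^{i+1}-y^i,\,y^{i+1}+y^i)$ reduces (\ref{eqn:CaputoScalar}) to proving
\[
\sum_{i=0}^{n-1} a_{n-i}\Bigl(y^{i+1}-y^i,\ y^n-\tfrac12(y^{i+1}+y^i)\Bigr)\ \ge\ 0.
\]

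First I would relabel the increments by $p_k := y^{n-k+1}-y^{n-k}$ for $k=1,\dots,n$ and introduce their partial sums $Q_k := \sum_{l=1}^{k}p_l$, a discrete antiderivative, for which $Q_0=0$, $Q_n = y^n-y^0$, and $y^n-\tfrac12(y^{i+1}+y^i)=Q_{k-1}+\tfrac12 p_k$ with $k=n-i$. Plugging this in, the left-hand side above becomes $\sum_{k=1}^n a_k\bigl[(Q_k-Q_{k-1},Q_{k-1})+\tfrac12\|Q_k-Q_{k-1}\|^2\bigr]$, and the identity $2(Q_k-Q_{k-1},Q_{k-1})+\|Q_k-Q_{k-1}\|^2=\|Q_k\|^2-\|Q_{k-1}\|^2$ collapses this to $\tfrac12\sum_{k=1}^n a_k\bigl(\|Q_k\|^2-\|Q_{k-1}\|^2\bigr)$. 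Abel summation (recalling $Q_0=0$) turns that into $\tfrac12 a_n\|Q_n\|^2+\tfrac12\sum_{k=1}^{n-1}(a_k-a_{k+1})\|Q_k\|^2$, which is nonnegative because $a_n>0$ and $(a_j)$ is strictly decreasing — the latter being precisely the concavity of $x\mapsto x^{1-\alpha}$ on $(0,\infty)$ for $\alpha\in(0,1)$.

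The continuous statement is the same argument in integral form. From $\partial^\alpha_t y = I^{1-\alpha}_t y_t$ one gets $\bigl(\partial^\alpha_t y(t),y(t)\bigr)-\tfrac12\partial^\alpha\|y(t)\|^2 = \tfrac{1}{\Gamma(1-\alpha)}\int_0^t (t-s)^{-\alpha}\bigl(y_t(s),\,y(t)-y(s)\bigr)\,ds$; writing $y(t)-y(s)=\int_s^t y_t(r)\,dr$ makes this a double integral over $\{0<s<r<t\}$, and the change of variable $\sigma=t-s$ together with $W(\sigma):=\int_0^\sigma y_t(t-\tau)\,d\tau$ (so $W(0)=0$, $W'(\sigma)=y_t(t-\sigma)$) rewrites it as $\tfrac{1}{2\Gamma(1-\alpha)}\int_0^t \sigma^{-\alpha}\tfrac{d}{d\sigma}\|W(\sigma)\|^2\,d\sigma$, the exact counterpart of the telescoped sum. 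Integration by parts yields $\tfrac{1}{2\Gamma(1-\alpha)}\bigl[t^{-\alpha}\|W(t)\|^2+\alpha\int_0^t\sigma^{-\alpha-1}\|W(\sigma)\|^2\,d\sigma\bigr]\ge 0$, once one checks $\sigma^{-\alpha}\|W(\sigma)\|^2\to 0$ as $\sigma\to 0^+$; that holds as soon as $y_t$ is bounded (indeed, merely $L^2$ in time) near $s=t$, so strictly speaking one needs slightly more than $y\in C([0,T];L^2)$ — enough, in any case, for $\partial^\alpha_t y$ to be defined.

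The routine parts are the two algebraic identities and the Abel summation / integration by parts; the only real content is the sign input, namely the monotonicity $a_1>a_2>\dots>0$ of the L1 weights (discrete case) and the vanishing of the boundary term at $0$ plus the positivity of $\sigma^{-\alpha}$ and $\sigma^{-\alpha-1}$ (continuous case). I expect the one mildly fiddly point to be the relabelling and the bookkeeping of the discrete antiderivative $Q_k$ so that the collapse to a telescoping sum is transparent; after that it is pure sign-chasing.
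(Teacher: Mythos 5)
Your proof is correct, but it takes a genuinely different route from the paper's. For the discrete inequality the paper works with the history form (\ref{eqn:L1Scheme2}) of the L1 operator and applies the Cauchy inequality $ab\le(a^2+b^2)/2$ term by term to the cross products $(y^i,y^n)$ and $(y^0,y^n)$, then cancels using the telescoping identity $\sum_{i=1}^{n-1}(b_{n-i-1}-b_{n-i})=b_0-b_{n-1}$; you instead keep the increment form, telescope against the partial sums $Q_k=y^n-y^{n-k}$, and conclude by Abel summation. Both arguments rest on the same sign input --- positivity and monotonicity of the weights $b_j(1-\alpha)$, coming from concavity of $x\mapsto x^{1-\alpha}$ --- but yours identifies the exact nonnegative defect $\tfrac{c}{2}\bigl(a_n\|y^n-y^0\|^2+\sum_{k=1}^{n-1}(a_k-a_{k+1})\|y^n-y^{n-k}\|^2\bigr)$ rather than only a one-sided bound, which is sharper information and transfers directly to graded or nonuniform meshes where monotone weights are all one has. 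For the continuous inequality the paper discretizes, invokes the discrete result, and passes to the limit via the truncation errors $\epsilon_n(h)$ and $\widetilde\epsilon_n(h)$ (a step written rather loosely there); your direct integral identity with $W(\sigma)=y(t)-y(t-\sigma)$ followed by integration by parts is self-contained and cleaner. Your caveat that the vanishing of the boundary term at $\sigma=0$ needs slightly more than $y\in C([0,T];L^2)$ is fair but not a real restriction: the extra regularity ($y_t$ existing, say square-integrable near $t$) is already implicit in the very definition of $\partial_t^\alpha y$, and hence in the paper's version of the statement as well.
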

\begin{proof}
Let $y^n \in L^2$. Then, by expanding the L1 scheme (\ref{eqn:L1Scheme2}) can write
\begin{equation}
	\begin{split}
		\left(\delta^\alpha y^n, y^n\right) &= (b_0 y^n - b_{n-1} y^0 - \sum_{i=1}^{n-1} (b_{n-i-1}-b_{n-i})y^i, y^n) \\
		&= b_0 \left\|y^n\right\|^2 - b_{n-1} (y^0,y^n) - \sum_{i=1}^{n-1} (b_{n-i-1}-b_{n-i}) (y^i, y^n).
	\end{split}
\end{equation} 
Now, by the Cauchy inequality $a b \leq (a^2+b^2)/2$ we have the following
\begin{equation}
	\left(\delta^\alpha y^n, y^n\right) \geq \left(b_0 - \frac{1}{2}b_{n-1}\right) \|y^n\|^2 - \frac{1}{2}b_{n-1}\|y^0\|^2 - \frac{1}{2} \sum_{i=1}^{n-1} (b_{n-i-1}-b_{n-i}) \|y^i\|^2 - \frac{1}{2}\|y^n\|^2 \sum_{i=1}^{n-1} (b_{n-i-1}-b_{n-i}).
\end{equation}
But, by the definition of weights (\ref{eqn:Weights}) we have $\sum_{i=1}^{n-1} (b_{n-i-1}-b_{n-i}) = b_0-b_{n-1}$, and hence
\begin{equation}
	\left(\delta^\alpha y^n, y^n\right) \geq \left(b_0 - \frac{1}{2}b_{n-1} - \frac{1}{2}b_0 - \frac{1}{2} b_{n-1}\right) \|y^n\|^2 -\frac{1}{2}b_{n-1}\|y^0\|^2 - \frac{1}{2} \sum_{i=1}^{n-1} (b_{n-i-1}-b_{n-i}) \|y^i\|^2.
\end{equation}
By canceling the terms, we arrive at (\ref{eqn:CaputoScalar}). 

For the continuous case, we fix $t\in(0,T)$ and introduce the grid $t_i = i h$ with $h = t/n$. By writing $y^i = y(t_i)$ and using (\ref{eqn:L1Error}) we have the following
\begin{equation}
	\partial^\alpha y(t) = \delta^\alpha y^n - \epsilon_n(h),
\end{equation}
where $R_n(h) \rightarrow 0^+$ as $h\rightarrow 0^+$. Now, by the previous calculation concerning the discrete case
\begin{equation}
	\left(\partial_t^\alpha y(t), y(t)\right) = \left(\delta_t^\alpha y(t), y(t)\right) - \left(\epsilon_n(h), y(t)\right) \geq \frac{1}{2} \delta^\alpha \|y^n\|^2 - \left(\epsilon_n(h), y(t)\right),
\end{equation}
and using (\ref{eqn:L1Error}) with $\|y\|^2$ instead of $y$ with possibly different error $\widetilde{\epsilon}_n(h)$ we have
\begin{equation}
	\left(\partial_t^\alpha y(t), y(t)\right) \geq \frac{1}{2} \partial^\alpha \|y(t)\|^2 + \frac{1}{2} \|\widetilde{\epsilon}_n(h)\|^2 - \left(\epsilon_n(h), y(t)\right).
\end{equation}
The proof is concluded with the refinement of the grid $h\rightarrow 0^+$, that is, $n\rightarrow\infty$. 
\end{proof}

The Galerkin spectral method can now be formulated by averaging (\ref{eqn:MainPDEWeak}) over the finite-dimensional space $V_N$ along with using the L1 scheme (\ref{eqn:L1Scheme}) for the Caputo derivative. However, to reduce computational complexity, we linearize the method using extrapolation \cite{Tho07}. That is, for any regular function of time $y=y(t)$, we define
\begin{equation}
\label{eqn:Extrapolation}
\widehat{y}(t_{n}) := 2 y(t_{n-1}) - y(t_{n-2}),
\end{equation}
which is of second-order accuracy, that is $|y(t_n) - \widehat{y}(t_n)| \leq C h^2$. Finally, we propose the following fully discrete method to calculate $U^n\in V_N$ which is a numerical approximation of $u(t_n)$ with $t_n$ defined in (\ref{eqn:TimeMesh})
\begin{equation}
\label{eqn:Galerkin}
(\delta^\alpha U^n, v) + a(D(\widehat{U}^n); U^n, v) = (f(t_n, \widehat{U}^n), v), \quad v\in V_N, \quad n \geq 2,
\end{equation}
with the initial condition
\begin{equation}
U^0 = P_N \varphi. 
\end{equation}
As we can see, the arguments of diffusivity $D$ and the source $f$ have been extrapolated, rendering the scheme linear. Since the extrapolation (\ref{eqn:Extrapolation}) requires two time steps back, we initialize the scheme with a \emph{single} solution of the nonlinear problem
\begin{equation}
\label{eqn:GalerkinInitialization}
(\delta^\alpha U^1, v) + a(D(U^1); U^1, v) = (f(t_1, \widehat{U}^1), v), \quad v\in V_N,
\end{equation} 
which can, for example, be solved by a multiple predictor-corrector iteration or Newton's method (see \cite{Li18}). Before we proceed to the main results of this paper, we recall an appropriate version of the discrete fractional Gr\"onwall's lemma. In \cite{Lia18} authors proved its very general version. For clarity we state here only its special version, which will be needed in the following.
\begin{lem}[Discrete fractional Gr\"onwall inequality \cite{Lia18}\}]\label{lem:Gronwall}
	Let $y \in C[0,T]$ and $\left\{t_n\right\}$ be the mesh (\ref{eqn:TimeMesh}) with $t_M = T$ and a sufficiently small step $h>0$. If for some constants $\lambda_{0,1,2}$, $A$, and $F$ we have
	\begin{equation}
		\delta^\alpha y^n \leq \lambda_0 y^n + \lambda_1 y^{n-1} + \lambda_2 y^{n-2} + \frac{A}{n^\alpha} + F, \quad 0<\alpha<1, \quad n = 1,2,...
	\end{equation}
	then
	\begin{equation}
		y_n \leq 2 E_\alpha(2 \lambda T^\alpha) \left(v^0 + A\tau^\alpha + \frac{T^\alpha}{\Gamma(1+\alpha)}F\right), \quad n = 1,2,...,M,
	\end{equation}
	where $\lambda = \lambda_0 + \lambda_1+\lambda_2$, $E_\alpha(t) = \sum_{i=0}^\infty z^k/\Gamma(1+k\alpha)$ is the Mittag-Leffler function \cite{Li19}. 
\end{lem}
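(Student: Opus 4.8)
\section*{Proof strategy for Lemma~\ref{lem:Gronwall}}

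The plan is to follow the complementary--discrete--kernel technique of \cite{Lia18} (see also \cite{Li18}), reducing the hypothesis to a pure discrete Gr\"onwall estimate that can be summed into a Mittag-Leffler series. Write the L1 operator (\ref{eqn:L1Scheme}) in convolution form $\delta^\alpha y^n = \sum_{k=1}^n a_{n-k}\nabla y^k$, where $\nabla y^k := y^k - y^{k-1}$ and $a_j = h^{-\alpha}\Gamma(2-\alpha)^{-1}\big((j+1)^{1-\alpha}-j^{1-\alpha}\big) = h^{-\alpha}\Gamma(2-\alpha)^{-1} b_{j+1}(1-\alpha)$, with $b_j$ as in (\ref{eqn:Weights}). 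Because $t\mapsto t^{1-\alpha}$ is concave for $0<\alpha<1$, the weights satisfy $a_0 > a_1 > \dots > 0$, and $a_0 = h^{-\alpha}/\Gamma(2-\alpha)\to\infty$ as $h\to 0$. The first step is therefore to choose $h$ small enough that $a_0 \ge 2\lambda_0$; then $\lambda_0 y^n$ is absorbed into the leading diagonal of $\delta^\alpha$, replacing $a_0$ by $\tilde a_0 := a_0-\lambda_0 \ge a_0/2$ while preserving the monotone decay of the kernel. It is exactly this halving that produces the factor $2$ in the conclusion.

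Next I would introduce the complementary kernels $\{\theta_j\}_{j\ge 0}$, defined recursively by $\theta_0 := 1/\tilde a_0$ and $\sum_{k=j}^n \theta_{n-k}\,\tilde a_{k-j} = 1$ for $1\le j\le n$, which act as a discrete fractional integral of order $\alpha$ inverting the modified $\delta^\alpha$. Two facts about them must be established: (i) positivity $\theta_j\ge 0$, which follows by induction from the monotone decay of $\tilde a_j$; and (ii) the summation bound $\sum_{j=1}^n\theta_{n-j}\le 2\,t_n^\alpha/\Gamma(1+\alpha)$, obtained by applying $\tilde a$ to the discrete power $t_n^\alpha$ and using the $h^{-\alpha}\Gamma(2-\alpha)^{-1}$ scaling. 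Convolving the hypothesis with $\theta$ and telescoping the left-hand side, since $\sum_k\theta_{n-k}\sum_j\tilde a_{k-j}\nabla y^j = y^n-y^0$, turns the inequality (after a harmless index shift) into
\begin{equation*}
	y^n \le y^0 + (\lambda_1+\lambda_2)\sum_{k=1}^{n}\theta_{n-k}\,\max_{j\le k}y^j + A\sum_{k=1}^n\theta_{n-k}k^{-\alpha} + F\sum_{k=1}^n\theta_{n-k}.
\end{equation*}
By (ii) the constant forcing contributes $F\sum\theta_{n-k}\le F\,T^\alpha/\Gamma(1+\alpha)$, while the $A$-term — a discrete analogue of $I^\alpha[t^{-\alpha}]=\Gamma(1-\alpha)$ being constant — contributes $O(h^\alpha)$, i.e. the term $A\tau^\alpha$ with $\tau=h$.

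Finally I would close with a discrete Gr\"onwall iteration. Setting $Y^n := \max_{k\le n} y^k$ and $C_0 := y^0 + A\tau^\alpha + T^\alpha F/\Gamma(1+\alpha)$, the displayed estimate gives $Y^n \le C_0 + \lambda\sum_{k=1}^n\theta_{n-k} Y^k$ with $\lambda=\lambda_0+\lambda_1+\lambda_2$; solving this by repeated substitution yields $Y^n \le C_0\sum_{j\ge 0}\lambda^j \Theta^{(j)}_n$, where $\Theta^{(j)}_n$ is the $j$-fold convolution of $\{\theta_k\}$. By the discrete semigroup (iterated fractional integration) estimate $\Theta^{(j)}_n \le (2 t_n^\alpha)^j/\Gamma(1+j\alpha)$ the series sums to $C_0\, E_\alpha(2\lambda t_n^\alpha)$, and the remaining factor $2$ from the first step gives the stated bound. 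The main obstacle is precisely the two kernel estimates (i)--(ii) together with the iterated-convolution bound $\Theta^{(j)}_n\le (2t_n^\alpha)^j/\Gamma(1+j\alpha)$; these are the technical core of \cite{Lia18}, and everything else is bookkeeping of constants, so one may alternatively invoke the general result of \cite{Lia18} directly.
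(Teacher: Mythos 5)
The paper does not actually prove this lemma: it is quoted (as a special case) from \cite{Lia18}, so there is no in-paper argument to compare yours against. Your sketch is, as far as it goes, a faithful reconstruction of the complementary-discrete-kernel proof given in \cite{Lia18} (and, for the L1 kernel specifically, in \cite{Li18}): the convolution form of $\delta^\alpha$, the monotone decay of the weights $a_j$, the complementary kernels $\theta_j$ with their two key properties (positivity and the $t_n^\alpha/\Gamma(1+\alpha)$ summation bound), and the Mittag-Leffler summation of the iterated convolutions are exactly the ingredients used there. Two points of care if you were to write it out in full: (i) the majorisation of $\lambda_1 y^{n-1}+\lambda_2 y^{n-2}$ by $(\lambda_1+\lambda_2)\max_{j\le k}y^j$ and the subsequent passage to $Y^n=\max_{k\le n}y^k$ silently require $y^n\ge 0$ and $\lambda_i\ge 0$ --- hypotheses present in \cite{Lia18} but not stated in the transcription above; (ii) the smallness condition on $h$ that lets you absorb $\lambda_0 y^n$ into the diagonal (the source of the factor $2$) should be made explicit, e.g. $h^{\alpha}\le 1/(2\Gamma(2-\alpha)\lambda_0)$, since this is what ``sufficiently small step'' means here. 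With those caveats your outline is correct; since its technical core (kernel positivity, the summation bound, and the iterated-convolution estimate $\Theta^{(j)}_n\le (2t_n^\alpha)^j/\Gamma(1+j\alpha)$) is precisely what \cite{Lia18} establishes, citing that paper --- as the author does --- is the economical choice.
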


The first result we prove is the unconditional stability of the proposed scheme.

\begin{thm}[Stability]
Let $U^n$ be the solution of (\ref{eqn:Galerkin})-(\ref{eqn:GalerkinInitialization}). Then
\begin{equation}
	\|U^n\|\leq C\left(\|U^0\|+F\right),
\end{equation}
where $F$ is the bound for $f$ as in (\ref{eqn:Assumptions}). 
\end{thm}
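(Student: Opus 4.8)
The plan is to test the scheme (\ref{eqn:Galerkin}) with $v = U^n$ and use Proposition \ref{prop:CaputoScalar} to convert the discrete Caputo term into $\tfrac{1}{2}\delta^\alpha\|U^n\|^2$, obtaining a recursive differential inequality of the type handled by the discrete fractional Gr\"onwall Lemma \ref{lem:Gronwall}. Concretely, from
\begin{equation*}
(\delta^\alpha U^n, U^n) + a(D(\widehat U^n); U^n, U^n) = (f(t_n,\widehat U^n), U^n)
\end{equation*}
the bilinear-form term is bounded below using coercivity, i.e. $a(D(\widehat U^n);U^n,U^n) = \int_0^1 D(\widehat U^n) (U^n_x)^2\,dx \geq D_-\|U^n_x\|^2 \geq 0$, so that term can simply be dropped. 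The right-hand side is estimated by Cauchy--Schwarz together with the uniform bound $|f|\leq F$: $(f(t_n,\widehat U^n),U^n) \leq F\|U^n\| \leq \tfrac{1}{2}F^2 + \tfrac{1}{2}\|U^n\|^2$. Combining, we get $\delta^\alpha\|U^n\|^2 \leq \|U^n\|^2 + F^2$, which is exactly the hypothesis of Lemma \ref{lem:Gronwall} applied to $y^n = \|U^n\|^2$ with $\lambda_0 = 1$, $\lambda_1=\lambda_2=0$, $A=0$, and the constant term $F^2$. The lemma then yields $\|U^n\|^2 \leq 2E_\alpha(2T^\alpha)\big(\|U^0\|^2 + \tfrac{T^\alpha}{\Gamma(1+\alpha)}F^2\big)$, and taking square roots and absorbing everything into a generic constant $C$ gives $\|U^n\| \leq C(\|U^0\| + F)$.

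One point that needs care is the initialization step: the above estimate applies for $n\geq 2$, but the $n=1$ case comes from the nonlinear equation (\ref{eqn:GalerkinInitialization}). Testing that equation with $v = U^1$ and applying the same coercivity and Cauchy--Schwarz arguments gives $\delta^\alpha\|U^1\|^2 \leq \|U^1\|^2 + F^2$ as well (the extrapolated argument $\widehat U^1$ appears only inside $f$, which is bounded by $F$ regardless, and inside $D$, which is bounded between $D_-$ and $D_+$ regardless), so the same recursive inequality holds uniformly for all $n\geq 1$ and Lemma \ref{lem:Gronwall} applies on the whole mesh. I would state this explicitly so the reader sees that the linearization does not disrupt the stability argument — indeed, it is precisely the boundedness assumptions (\ref{eqn:Assumptions}) on $D$ and $f$ that make the extrapolated arguments harmless here, since neither coercivity nor the $L^\infty$ bound on $f$ cares which function is plugged into the nonlinearities.

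The main obstacle is essentially bookkeeping rather than a deep difficulty: one must make sure that applying Proposition \ref{prop:CaputoScalar} to $y^n = U^n$ (an element of $V_N\subseteq L^2$) is legitimate — it is, since the proposition only requires $y^n\in L^2$ — and that Lemma \ref{lem:Gronwall} is invoked with the correct identification of the sequence $y^n = \|U^n\|^2$ and constants. The only subtlety is the Cauchy inequality step producing the extra $\tfrac{1}{2}\|U^n\|^2$ on the right-hand side, which is why $\lambda_0 = 1 \neq 0$; this is harmless because Lemma \ref{lem:Gronwall} permits a nonzero $\lambda$, contributing only the Mittag-Leffler prefactor $E_\alpha(2\lambda T^\alpha)$. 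No regularity of the exact solution $u$ is needed for stability — this is purely an a priori estimate on the discrete solution — so the proof is short once the ingredients are lined up.
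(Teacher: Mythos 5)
Your proposal is correct and follows essentially the same route as the paper's own proof: testing with $v=U^n$, dropping the coercive bilinear form, bounding the source term via Cauchy--Schwarz and $|f|\leq F$, invoking Proposition \ref{prop:CaputoScalar}, treating the nonlinear initialization step $n=1$ by the identical argument, and closing with Lemma \ref{lem:Gronwall} applied to $y^n=\|U^n\|^2$. Your added remarks on why the extrapolated arguments are harmless and on the role of $\lambda_0=1$ are accurate elaborations of what the paper leaves implicit.
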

\begin{proof}
We put $v = U^n$ in (\ref{eqn:Galerkin}) to obtain
\begin{eqnarray}
	(\delta^\alpha U^n, U^n) + a(D(\widehat{U}^n); U^n, U^n) = (f(t_n, \widehat{U}^n), U^n), \quad n\geq 2.
\end{eqnarray}
By our assumption of diffusivity (\ref{eqn:Assumptions}) the form $a$ is positive-definite.
\begin{equation}
	a(D(\widehat{U}^n); U^n, U^n) \geq D_- \|U^n_x\| > 0.
\end{equation}
Furthermore, by Proposition \ref{prop:CaputoScalar}, we can write
\begin{equation}
	\frac{1}{2} \delta^\alpha \|U^n\|^2 \leq F \|U^n\| \leq \frac{1}{2} \left(F^2 + \|U^n\|^2\right).
\end{equation}
By the same reasoning, this estimate can be obtained exactly for $n=1$. Finally, applying the fractional Gr\"onwall's lemma (Lemma \ref{lem:Gronwall}) to $y^n = \|U^n\|^2$ we conclude that
\begin{equation}
	\|U^n\|^2 \leq C \left(\|U^0\|^2 + \frac{T^\alpha}{\Gamma(1+\alpha)}F^2\right).
\end{equation}
Using the inequality $a^2 + b^2 \leq (a+b)^2$, we arrive at the conclusion. 
\end{proof}

We are now ready to proceed with the convergence result. Note the time regularity of the solution. It is widely known that the solution of the linear constant or the space-dependent coefficient subdiffusion behaves as $t^\alpha$ for small times \cite{Sak11,McL10} (that is, its derivative blows up at the origin). The time-dependent coefficient case is still being investigated \cite{Jin19a,Mus18}, let alone the semilinear version \cite{al2019numerical}. In the following theorem, we consider two regularity cases: $C^2$ smooth and the typical power-type behavior near the origin. 

\begin{thm}[Convergence]\label{thm:Convergence}
	Let $u \in C([0,T]; H^m)$ be a solution of (\ref{eqn:MainPDEWeak}) and $U^n$ be a solution of (\ref{eqn:Galerkin}). Assume that $u_x$ is bounded. Then, for sufficiently large $m$ and small $h>0$ we have
	\begin{equation}
		\|u(t_n) - U^n \| \leq C \left(N^{-m} + E(h)\right),
	\end{equation}	
	where the constant $C$ depends on $\alpha$ and derivatives of $u$, while $E(h)$ is the temporal error that depends on the regularity of the solution
	\begin{equation}
		\label{eqn:TimeError}
		E(h) = \begin{cases}
			h^\alpha, & \|\partial^{(m)}_t u(t)\|\leq C(1+t^{\alpha-m}), \\
			h^{2-\alpha}, & \|\partial^{(m)}_t u(t)\|\leq C, 
		\end{cases}
		\quad m = 0, 1, 2.
	\end{equation}
\end{thm}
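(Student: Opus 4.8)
The plan is to follow the classical Galerkin error-splitting strategy, adapted to the fractional-in-time, quasilinear setting. First I would write $u(t_n) - U^n = (u(t_n) - R_N u(t_n)) + (R_N u(t_n) - U^n) =: \rho^n + \theta^n$, where $R_N$ is the Ritz projection from (\ref{eqn:RitzProjection}). The term $\rho^n$ is immediately controlled by the Ritz accuracy estimate (\ref{eqn:RitzProjectionAccuracy}), giving $\|\rho^n\| \leq C N^{-m}\|u\|_m$, which accounts for the $N^{-m}$ contribution. So the whole problem reduces to estimating $\theta^n \in V_N$. Note one subtlety: $R_N$ in (\ref{eqn:RitzProjection}) is defined using the weight $D(u)$ evaluated at the \emph{exact} solution, so when I subtract the scheme (\ref{eqn:Galerkin}) which uses $D(\widehat U^n)$, extra ``consistency of the bilinear form'' terms of the type $a(D(u(t_n)) - D(\widehat U^n); R_N u(t_n), v)$ will appear and must be absorbed.

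Next I would derive the error equation for $\theta^n$. Testing (\ref{eqn:MainPDEWeak}) at $t = t_n$ against $v \in V_N$, subtracting (\ref{eqn:Galerkin}), and inserting $R_N u(t_n)$, I get an identity of the form
\begin{equation}
(\delta^\alpha \theta^n, v) + a(D(\widehat U^n); \theta^n, v) = (\delta^\alpha \rho^n \text{-type terms} + \text{truncation} + \text{nonlinearity mismatch}, v).
\end{equation}
The right-hand side collects: (i) the L1 truncation error $\epsilon_n(h)$ from (\ref{eqn:L1Order}) / Remark~\ref{rem:Truncation}, which is exactly where the case split in (\ref{eqn:TimeError}) enters — $h^{2-\alpha}$ for $C^2$ data, $h^\alpha$ for the $t^\alpha$-type data with $r=1$; (ii) the term $(\delta^\alpha \rho^n, v)$, which by the orthogonality-type properties of $R_N$ and the accuracy estimate contributes at worst $C N^{-m}$ after one applies $\delta^\alpha$ to a bounded-in-time quantity (using that $\delta^\alpha$ of a bounded sequence is bounded); (iii) the diffusivity mismatch $a(D(u(t_n)) - D(\widehat U^n); R_N u(t_n), v)$, which I split as $D(u(t_n)) - D(\widehat{u}(t_n))$ (an $O(h^2)$ extrapolation error by (\ref{eqn:Extrapolation}), times the Lipschitz bound $L$ from (\ref{eqn:Assumptions}), times $\|(R_N u(t_n))_x\|_\infty \leq C\|u_x\|_\infty$ by (\ref{eqn:RitzInftyEstimate})) plus $D(\widehat u(t_n)) - D(\widehat U^n)$ (bounded by $L(\|\theta^{n-1}\| + \|\theta^{n-2}\| + \|\rho^{n-1}\| + \|\rho^{n-2}\|)$); and (iv) the analogous source mismatch $(f(t_n,u(t_n)) - f(t_n,\widehat U^n), v)$, handled the same way using $|f_u| \leq L$.

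Then I take $v = \theta^n$. Proposition~\ref{prop:CaputoScalar} gives $(\delta^\alpha \theta^n, \theta^n) \geq \frac12 \delta^\alpha \|\theta^n\|^2$, and the coercivity $a(D(\widehat U^n); \theta^n, \theta^n) \geq D_- \|\theta^n_x\|^2 \geq 0$ lets me drop that term (or keep it to absorb $\|\theta^n_x\|$-type contributions from (iii)). All right-hand side terms are bounded by Cauchy–Schwarz and Young's inequality, yielding
\begin{equation}
\delta^\alpha \|\theta^n\|^2 \leq \lambda_0 \|\theta^n\|^2 + \lambda_1 \|\theta^{n-1}\|^2 + \lambda_2 \|\theta^{n-2}\|^2 + \frac{A}{n^\alpha} + F,
\end{equation}
precisely the hypothesis of the discrete fractional Grönwall inequality (Lemma~\ref{lem:Gronwall}), where $A$ and $F$ encode $N^{-2m}$, $h^{2(2-\alpha)}$ or $h^{2\alpha}$, and $h^4$; the $A/n^\alpha$ slot is where the graded-mesh-flavoured part of the truncation estimate from Remark~\ref{rem:Truncation} lands. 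Applying Lemma~\ref{lem:Gronwall} with $y^n = \|\theta^n\|^2$ and then $\sqrt{\cdot}$, combined with $\|\theta^0\| \leq \|R_N u(0) - P_N\varphi\| \leq CN^{-m}$ and the separate treatment of the initialization steps $n=1$ via (\ref{eqn:GalerkinInitialization}) (where $D$ and $f$ use $U^1$ itself, so a short fixed-point / direct argument is needed and the $o(h)$-smallness of $h$ guarantees the nonlinear solvability), gives $\|\theta^n\| \leq C(N^{-m} + E(h))$. Triangle inequality with the $\rho^n$ bound finishes the proof.

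The main obstacle I expect is item (iii): correctly bookkeeping the diffusivity-mismatch term in the \emph{energy} (i.e. $H^1$) norm. The factor $a(D(\widehat u) - D(\widehat U^n); R_N u, \theta^n)$ produces $\|\theta^n_x\|$ (not $\|\theta^n\|$), so one must use the coercivity term $D_-\|\theta^n_x\|^2$ to absorb it via Young's inequality — and this only works because $\|(R_N u)_x\|_\infty$ is bounded uniformly, which is exactly the content of (\ref{eqn:RitzInftyEstimate}) and the reason the hypothesis ``$u_x$ bounded'' is imposed. A secondary technical point is that the extrapolated argument $\widehat U^n$ must stay in a region where the bounds (\ref{eqn:Assumptions}) on $D$ and $f$ apply; since those bounds are assumed globally (boundedness of $D$, $f$ and their first derivatives for all $u$), this is not an issue here, which is why no a-priori $L^\infty$ bound on $U^n$ is needed — a genuine simplification compared with the merely-locally-Lipschitz case.
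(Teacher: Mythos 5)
Your proposal follows essentially the same route as the paper's own proof: the Ritz-projection splitting $u(t_n)-U^n = \rho^n + \theta^n$, the error equation with the diffusivity-mismatch term $a(D(u(t_n))-D(\widehat U^n);R_Nu(t_n),v)$ handled via (\ref{eqn:RitzInftyEstimate}) and absorbed into the coercivity term by Young's inequality, Proposition~\ref{prop:CaputoScalar} with $v=\theta^n$, and the discrete fractional Gr\"onwall lemma with the case split coming from the L1 truncation error as in Remark~\ref{rem:Truncation}. The argument is correct and matches the paper; your only loose phrase is that ``$\delta^\alpha$ of a bounded sequence is bounded'' (it is not, uniformly in $h$ --- the correct justification bounds $\delta^\alpha \rho^n$ through the temporal regularity of $u-R_Nu$), but the paper glosses over the same point.
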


\begin{proof}
We start by decomposing the error into two parts
\begin{equation}
	u(t_n) - U^n = u(t_n) - R_N u(t_n) + R_N u(t_n) - U^n =: r^n + e^n.
\end{equation}
Since the estimate on $r^n$ is provided in (\ref{eqn:RitzProjectionAccuracy}) we can focus only on $e^n$ due to the fact that
\begin{equation}
	\|u(t_n) - U^n\| \leq \|r^n\| + \|e^n\| \leq C N^{-m} + \|e^n\|.
\end{equation} 
The error equation can be devised as follows. First, for any $v\in V_N$, we write
\begin{equation}
	\begin{split}
		\left(\delta^\alpha e^n, v\right) + a(D(\widehat{U}^n); e^n, v) 
		&= \left(\delta^\alpha R_N u(t_n), v\right) - \left(\delta^\alpha U^n, v\right) + a(D(\widehat{U}^n); R_N u(t_n), v) - a(D(\widehat{U}^n); U^n, v) \\
		&= \left(\delta^\alpha R_N u(t_n), v\right) + a(D(\widehat{U}^n); R_N u(t_n), v) - (f(t_n,\widehat{U}^n), v),
	\end{split}
\end{equation}
where we have used the fact that $U^n$ is a solution of (\ref{eqn:Galerkin}). Next, we can add and subtract terms in the form $a$ and utilize the weak form of the solution (\ref{eqn:MainPDEWeak})
\begin{equation}
	\begin{split}
		\left(\delta^\alpha e^n, v\right) &+ a(D(\widehat{U}^n); e^n, v) 
		= \left(\delta^\alpha R_N u(t_n), v\right) + a(D(\widehat{U}^n)-D(u(t_n)); R_N u(t_n), v) \\
		&+ a(D(u(t_n)); R_N u(t_n), v) - (f(t_n,\widehat{U}^n), v) \\
		&= \left(\delta^\alpha R_N u(t_n) - \partial^\alpha_t u(t_n), v\right) + a(D(\widehat{U}^n)-D(u(t_n)); R_N u(t_n), v) \\
		&- (f(t_n,\widehat{U}^n)-f(t_n, u(t_n)), v).
	\end{split}
\end{equation} 
When we put $v = e^n$ and use our assumptions (\ref{eqn:Assumptions}) along with Poincar\'e-Friedrichs inequality and Proposition \ref{prop:CaputoScalar} we obtain the following
\begin{equation}
	\frac{1}{2}\delta^\alpha \|e^n\|^2 + D_0 \|e^n\|_1 \leq \rho_1 + \rho_2 + \rho_3, 
\end{equation}
where the definitions of the remainders $\rho_i$ are understood and $D_0$ is some constant. Now, we proceed with estimates of these. By Theorem \ref{thm:L1Scheme} and (\ref{eqn:RitzProjectionAccuracy}) we have
\begin{equation}
	\begin{split}
		\rho_1 &\leq \|\delta^\alpha R_N u(t_n) - \partial^\alpha_t u(t_n) \| \|e^n\| \\
		&\leq  \left(\|\delta^\alpha r^n \| + \| \delta^\alpha u(t_n) - \partial^\alpha_t u(t_n)\| \right)\|e^n\| \leq C (N^{-m} + \epsilon_n(h)) \|e^n\|_1,
	\end{split}
\end{equation}
where the discretization error of the Caputo derivative $\epsilon_n(h)$ defined in (\ref{eqn:L1Error}) behaves as $h^{2-\alpha}$ for a $C^2$ smooth function (Theorem \ref{thm:L1Scheme}), while as $n^{-\alpha}$ for a power-law type behavior (Remark \ref{rem:Truncation}). Next, the second remainder can be estimated with the use of (\ref{eqn:RitzInftyEstimate}) and the regularity of $D$ as assumed in (\ref{eqn:Assumptions})
\begin{equation}
	\rho_2 \leq C \|(R_N u(t_n))_x\|^2_\infty \|\widehat{U}^n - u(t_n) \| \|e^n\|_1 \leq C\left(\|\widehat{U}^n - \widehat{u}(t_n)\| + \|\widehat{u}(t_n) - u(t_n)\|\right) \|e^n\|_1,
\end{equation}
Now, by the definition of the extrapolation (\ref{eqn:Extrapolation}) we have
\begin{equation}
	\rho_2 \leq C \left(\|r^{n-1}\| + \|r^{n-2}\| + \|e^{n-1}\| + \|e^{n-2}\| + h^2\right) \|e^n\|_1 \leq C \left(N^{-m} +\|e^{n-1}\| + \|e^{n-2}\| + h^2 \right).
\end{equation}
And the last remainder, due to the regularity assumptions (\ref{eqn:Assumptions}), satisfies
\begin{equation}
	\rho_3 \leq L \|\widehat{U}^n - u(t_n) \| \|e^n\|_1 \leq C \left(N^{-m} +\|e^{n-1}\| + \|e^{n-2}\| + h^2 \right) \|e^n\|_1.
\end{equation}
Putting all the above estimates back to the error estimate and using the $\epsilon$-Cauchy inequality, i.e. $a b \leq (\epsilon a^2 + \epsilon^{-1}b^2)/2$, to extract the $H^1$ norm we obtain the following
\begin{equation}
	\frac{1}{2}\delta^\alpha \|e^n\|^2 + D_0 \|e^n\|_1^2 \leq C\left(\|e^{n-1}\|^2 + \|e^{n-2}\|^2 + \left(N^{-m} + \epsilon_n(h)\right)^2\right) + D_0 \|e^n\|_1^2.
\end{equation}
Therefore,
\begin{equation}
	\delta^\alpha \|e^n\|^2 \leq C\left(\|e^{n-1}\|^2 + \|e^{n-2}\|^2 + \left(N^{-m} + \epsilon_n(h)\right)^2\right),
\end{equation}
and the discrete fractional Gr\"onwall lemma (Lemma \ref{lem:Gronwall}) yields
	\begin{equation}
		\|e^n\|^2 \leq C \left(\|e^0\|^2 + \left(N^{-m} + E(h)\right)^2\right),
	\end{equation}
	with $E(h)$ defined in (\ref{eqn:TimeError}). Note that when applying Lemma \ref{lem:Gronwall} we have used the case $A=0$ for $C^2$ smooth solution while for the typical power-law behavior we have taken $F=0$ and used the result cited from Remark \ref{rem:Truncation}. Finally, observe that $\|e^0 \|= \|R_N u(0) - U^0 \|= \|R_N \phi - P_N \phi\|\ \leq C N^{-m}$ what finishes the proof.
\end{proof}

\begin{rem}\label{rem:Error}
	For the solution satisfying a typical power-type behavior at the origin, we have proved that the scheme error for our quasilinear equation is $O(h^\alpha)$ for $h\rightarrow 0^+$. However, we might presume that this estimate is not optimal. Known results for linear and semilinear subdiffusion equations \cite{Kop19,al2019numerical} indicate that 
	\begin{equation}
		E(h) = E_n(h) = C t_n^{\alpha-1} h,
	\end{equation}
	That is, the error depends on the time instant. Near the origin, the order is $\alpha$, while at fixed $t>0$ it is $1$. More precisely, when $h\rightarrow 0^+$ we have $\max_{t\in (0,T]} E(h) = O(h^\alpha)$ (put $t=t_1=h$). On the other hand, when we compute the error at a fixed time $t_n \rightarrow t > 0$ we obtain $R_n(h) = O(h)$. We verify this behavior numerically for our equation (\ref{eqn:MainPDE}) in the next section. Nevertheless, we would like to indicate that proving an optimal error bound for the solution of the quasilinear subdiffusion equation with the L1 discretization of the Caputo derivative is still an open problem (both for smooth and nonsmooth initial data). Some initial steps have been taken in \cite{plociniczak2022error}. 
	
	Note also that all the steps in the proof of Theorem \ref{thm:Convergence} could have been done for a graded mesh $t_n = T (n h /T)^r$ as it would only affect the error term $\epsilon_n(h)$.  
\end{rem}

\begin{rem}\label{rem:Boundedness}
By a standard argument \cite{Tho07} the global boundedness of both the source $f$ and the diffusivity $D$ as in (\ref{eqn:Assumptions}) can be relaxed to hold only locally. This follows from the fact that $U$ is generally close to $u$ and therefore is affected only by local values of $f$ and $D$. 
\end{rem}

\section{Implementation and numerical illustration}
In this section, we discuss the practical implementation of the Galerkin scheme (\ref{eqn:Galerkin})-(\ref{eqn:GalerkinInitialization}). Let $\left\{\Phi_k\right\}_{k=1}^N$ be the basis of $V_N$. To satisfy the boundary conditions, we can choose the modal Legendre basis \cite{Can07}
\begin{equation}
\Phi_k(x) = L_k(2x-1) - L_{k+2}(2x-1),
\end{equation}
with Legendre polynomials $L_k$. A similar construction can also be done with Chebyshev polynomials or, more generally, Jacobi orthogonal polynomials (for a thorough discussion, see \cite{She11}). We expand the solution $U^n$ in this basis
\begin{equation}
U^n = \sum_{k=1}^N y_k^n \Phi_k.
\end{equation}
Taking $v = \Phi_j$, denoting $\textbf{y}^n = (y_1^n,...,y_N^n)$, and plugging the above into the Galerkin scheme (\ref{eqn:Galerkin}) we obtain
\begin{equation}
M \delta^\alpha \textbf{y}^n + A(\widehat{\textbf{y}}^n) \textbf{y}^n = \textbf{f}^n(\widehat{\textbf{y}}^n),
\end{equation}
where the mass matrix $M = \left\{M_{ij}\right\}_{i,j=1}^N$, the stiffness matrix $A = \left\{A_{ij}\right\}_{i,j=1}^N$, and the load vector $\textbf{f}^n=\left\{f_i^n\right\}_{i=1}^N$ are defined by
\begin{equation}
M_{ij} = (\Phi_i, \Phi_j), \quad A_{ij}(\textbf{y}^n) = a\left(D\left(\sum_{k=1}^N \widehat{y}_k^n \Phi_k\right); \Phi_i,\Phi_j\right), \quad f_i^n = \left(f\left(t_n, \sum_{k=1}^N \widehat{y}_k^n \Phi_k\right), \Phi_i\right).
\end{equation}
Note that in the case of an orthogonal basis, the mass matrix is diagonal. Finally, we can expand the Caputo derivative according to the L1 scheme (\ref{eqn:L1Scheme}) to arrive at
\begin{equation}
\label{eqn:GalerkinImplementation}
\begin{split}
	&\left(I + h^\alpha \Gamma(2-\alpha) M^{-1} A(\widehat{\textbf{y}}^n)\right) \textbf{y}^n \\
	&= b_{n-1} \textbf{y}^0 + \sum_{i=1}^{n-1} \left(b_{n-i-1}(1-\alpha) - b_{n-i}(1-\alpha)\right) \textbf{y}^i + h^\alpha \Gamma(2-\alpha) M^{-1} \textbf{f}^n(\widehat{\textbf{y}}^n),
\end{split}
\end{equation}
which clearly possesses a time-stepping form: the right-hand side depends on $\textbf{y}^i$ for $i=0,1,...,n-1$. The first step, that is, (\ref{eqn:GalerkinInitialization}), is implemented in a similar way with the distinction of a nonlinear algebraic equation. The nonlocality of the method is manifestly evident in the sum above. This is the reason for the larger computational and memory cost of all schemes for fractional equations. One way of dealing with this problem is to use a spectral method that requires only several degrees of freedom for spatial discretization in order to achieve good accuracy. Additionally, to maximize efficiency, we implement several mechanisms that reduce the calculation time. All programming has been done in Julia language, which is known to provide fast algorithm execution.
\begin{itemize}
\item All integrals are calculated with Gaussian quadrature with pregenerated nodes and weights. 
\item The mass matrix for our basis is sparse due to the compact combination of Legendre polynomials. 
\item The entries of the matrix $A$ and the vector $\textbf{f}^n$ can be calculated \emph{simultaneously}. We implement this by multi-threaded computations on several CPU cores. 
\item We check for linearity: if either $A$ or $\textbf{f}^n$ does not depend on $u$, it can be precalculated before the time-stepping. 
\end{itemize}

We choose two examples to test our method. The one is an "engineered" solution obtained by choosing an appropriate source function to obtain the desired solution. This approach has the advantage of straightforward error estimation. The second example does not have an exact analytic form solution and in order to estimate the error we use the extrapolation. In each of the following examples, we choose
\begin{equation}
D(u) = 1+u,
\end{equation}
which can be thought as a first-order Taylor expansion of a general smooth diffusivity. The source function in example 1 is chosen appropriately in order to produce the desired solution. The second example is a subdiffusive version of the Fisher-Kolmogorov equation with nonlinear diffusivity. For any fixed initial condition $u(x,0) = \varphi$, the examples are the following.
\begin{enumerate}
\item Exact solution with parameter $\mu > 0$
\begin{equation}
	\label{eqn:Exact}
	u(x,t) = (1+t^\mu) \varphi(x), \quad f(x,t,u) = \text{ chosen appropriately below}.
\end{equation}
\item Subdiffusive Fisher-Kolmogorov equation
\begin{equation}
	\label{eqn:FK}
	f(x,t,u) = u(1-u).
\end{equation}
\end{enumerate}
Note that the example 1 models the usual behavior of the solution of the subdiffusion equation \cite{Sak11}. When $0<\mu<1$ the derivative blows at $t=0$. The typical example is $\mu = \alpha$ (see \cite{Sak11}). In the below calculations we will consider the non-smooth case $\mu = \alpha$, smooth case $\mu = 2$, and the second example. In all examples, we choose the terminal time $T=1$. The initial condition $\varphi$ will be chosen differently for estimating the spatial and temporal order. This choice is due to the fact that we would like to decouple the errors from each other. 

Note that both the diffusivity $D$ and the source $f$ are unbounded in our examples. However, as noted in Remark \ref{rem:Boundedness} our results remain valid. 

We start by presenting estimates on the spatial order of convergence. The smooth initial condition and the source are the following
\begin{equation}
\label{eqn:XIC}
\varphi(x) = \sin(\pi x), \quad f(x,t,u) = \pi^2 \left(u(1-2u) - (1+t^\mu)^2\right) + \frac{\Gamma(1+\mu)}{\Gamma(1-\alpha+\mu)}t^{\mu-\alpha} \varphi(x).
\end{equation}
Note that it cannot be expressed as a finite combination of basis functions. In order to make our calculations independent of the temporal discretization for all examples, we compute the reference solution with step $h = 10^{-2}$ and number of degrees of freedom $N=50$. The error is calculated by comparing the solutions obtained with fixed $h$ and $3\leq N \leq 25$ with the reference one. In all the examples we have $\alpha = 0.5$. However, our calculations show that the overall shape of the error does not change for other values of $\alpha$. The semi-log plot of the error for our three examples is presented in Fig. \ref{fig:xOrder}. As we can see, for all of them the error decreases approximately linearly, indicating an exponential (spectral) accuracy of the method. For $N$ changing by only a few, the error falls by orders of magnitude. 

\begin{figure}
\centering
\includegraphics[scale = 0.7]{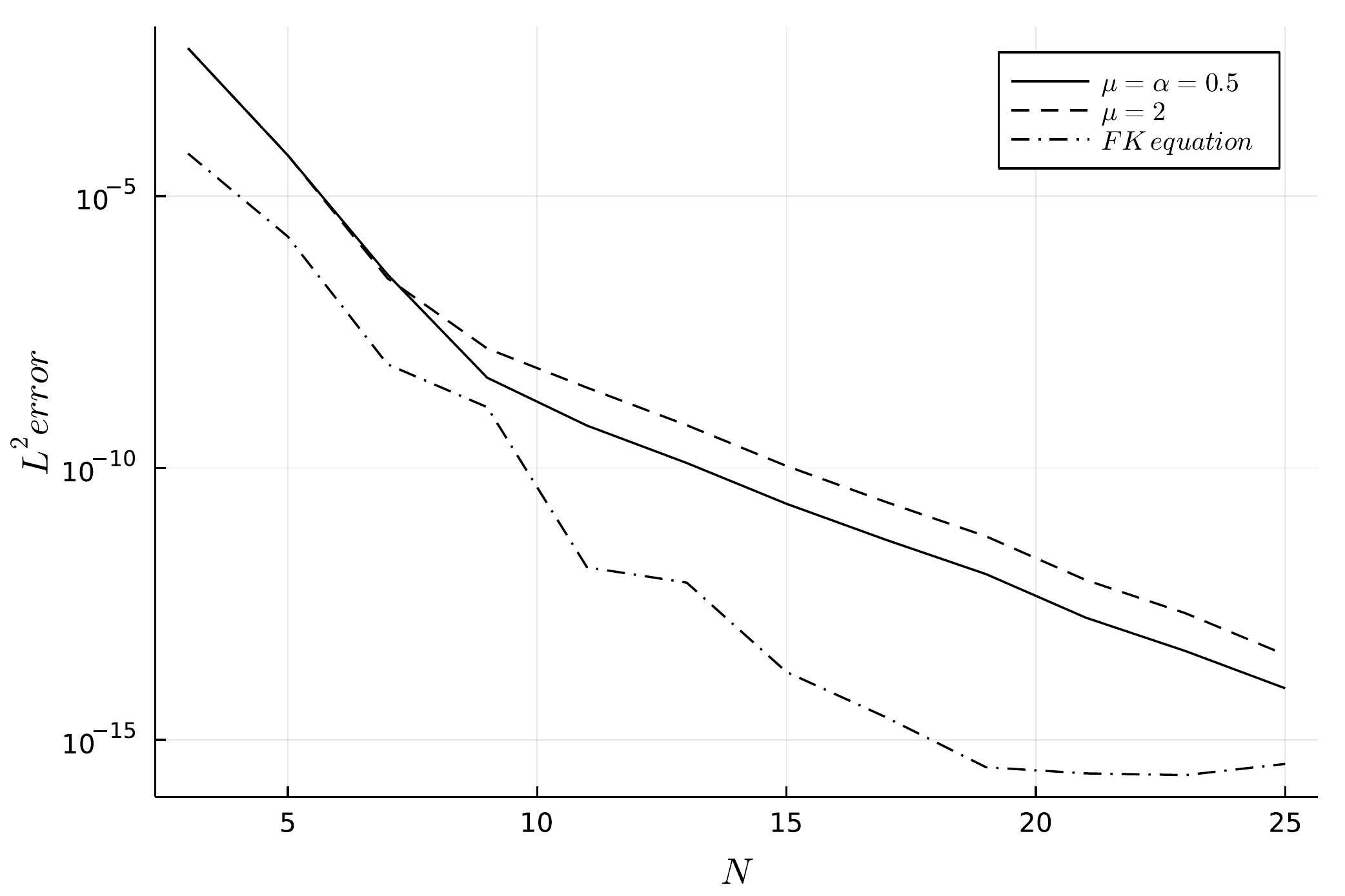}
\caption{A semi-log plot of the $L^2$ error for three examples: (\ref{eqn:Exact}) with $\mu = 2$ and $\mu = \alpha = 0.5$, and (\ref{eqn:FK}) as a function of $N$ with fixed $h = 10^{-2}$. The initial condition is taken to be (\ref{eqn:XIC}). }
\label{fig:xOrder}
\end{figure}

In the next simulation, we estimate the temporal convergence order of the method. The initial condition and the source are chosen to be
\begin{equation}
\label{eqn:TIC}
\varphi(x) = \frac{1}{6}\Phi_1(x) = x(1-x), \quad f(x,t,u) = (1+t^2)(1-t^2+6u) + \frac{\Gamma(1+\mu)}{\Gamma(1-\alpha+\mu)}t^{\mu-\alpha} \varphi(x)
\end{equation}
that is, the first basis function. Thanks to that choice, we calculate exactly in space and focus only on the temporal error. We set the number of degrees of freedom at $N=5$ and vary the time step $h$. When the exact solution is available, we calculate the error accordingly. Taking into account Remark \ref{rem:Error} we compute the order in two ways by evaluating either the maximal error in time (denoted by subscript $\infty$) or at a fixed point. In the Fisher-Kolmogorov equation, we estimate the convergence error with Aitken's formula based on extrapolation (see \cite{Plo17b})
\begin{equation}
	\label{eqn:Aitken}
	\begin{split}
		&\text{order} \approx \log_2 \frac{\|u_{h/2}(t)-u_h(t)\|}{\|u_{h/4}(t)-u_{h/2}(t)\|}, \quad t = 1 \text{ fixed}, \\
		&\text{order}_\infty \approx \log_2 \frac{\max_{t\in (0,1]}\|u_{h/2}(t)-u_h(t)\|}{\max_{t\in (0,1]}\|u_{h/4}(t)-u_{h/2}(t)\|}. 
	\end{split}
\end{equation}
where $u_h(t)$ is the solution calculated at $t \in (0,1]$ with step $h$. The results for (\ref{eqn:Exact}) are presented on the log-log plot shown in Figs. \ref{fig:tOrder}-\ref{fig:tOrderMax}.All the graphs become approximately parallel to the respective reference lines for the errors calculated at a fixed time. That is, the scheme for the smooth case attains an order $2-\alpha$, while for typical power-type behavior, we obtain the first order of convergence. However, for small $\alpha$, we observed somewhat lower accuracy, which would require one to use even finer grids to fully resolve. As for the maximal error over the time interval, we observe somewhat different behavior. That is, numerical calculations indicate that the order is indeed equal to $\alpha$ as Theorem \ref{thm:Convergence} claims. This might suggest that the actual temporal error of the method is of the order $t^{\alpha-1}_n h$, as speculated in Remark \ref{rem:Error}. Of course, this behavior naturally coincides with the linear or semilinear cases investigated in the literature.  

\begin{figure}
\centering
\includegraphics[scale = 0.7]{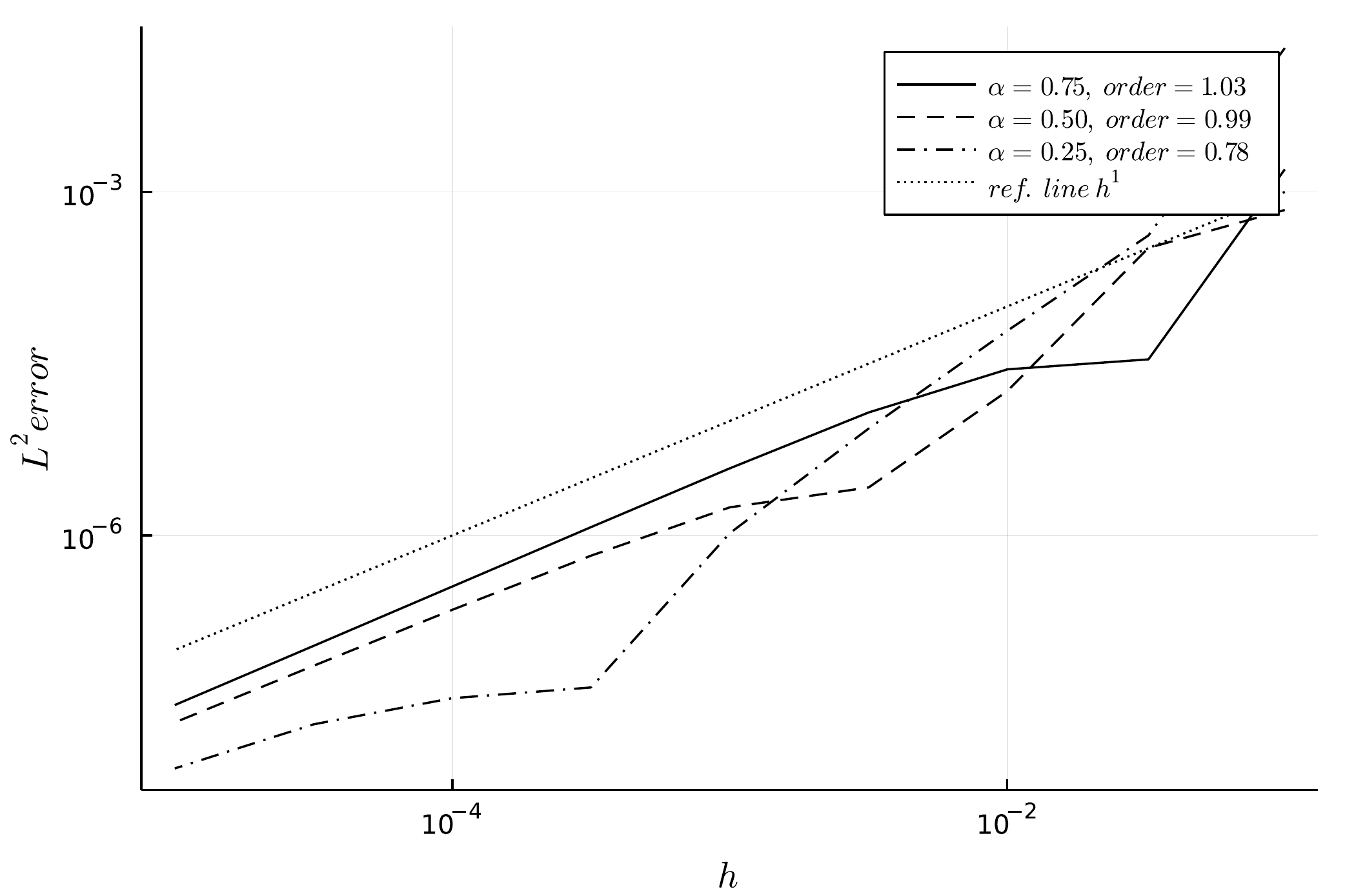}
\includegraphics[scale = 0.7]{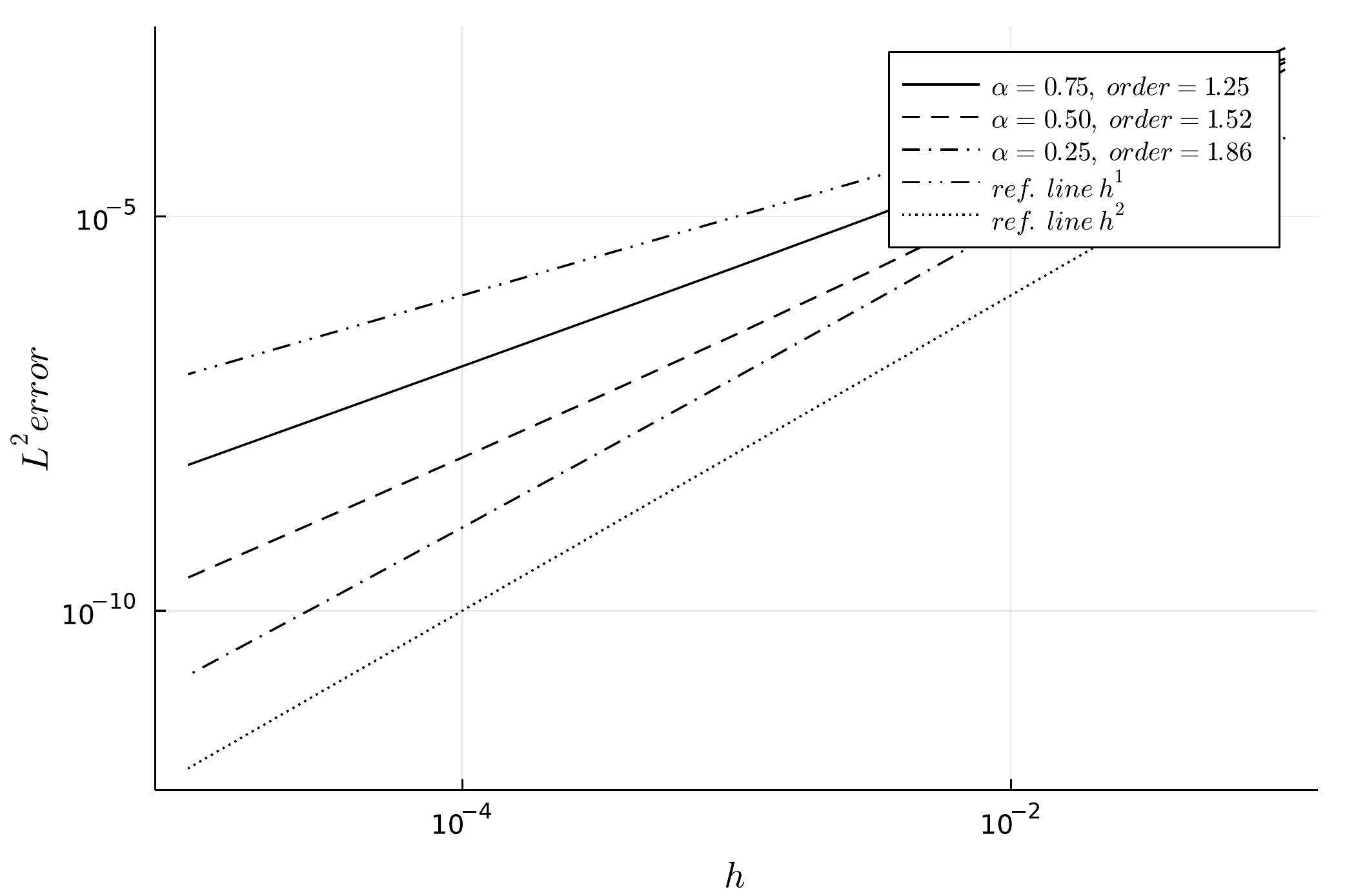}
\caption{A log-log plot of the $L^2$ error for (\ref{eqn:Exact}) and initial condition (\ref{eqn:TIC}) with respect to the time step $h$. Here, $N=5$, $\mu = \alpha$ (top), and $\mu = 2$ (bottom). Calculated order of convergence for different $\alpha$ is given in the legend.}
\label{fig:tOrder}
\end{figure}

\begin{figure}
\centering
\includegraphics[scale = 0.7]{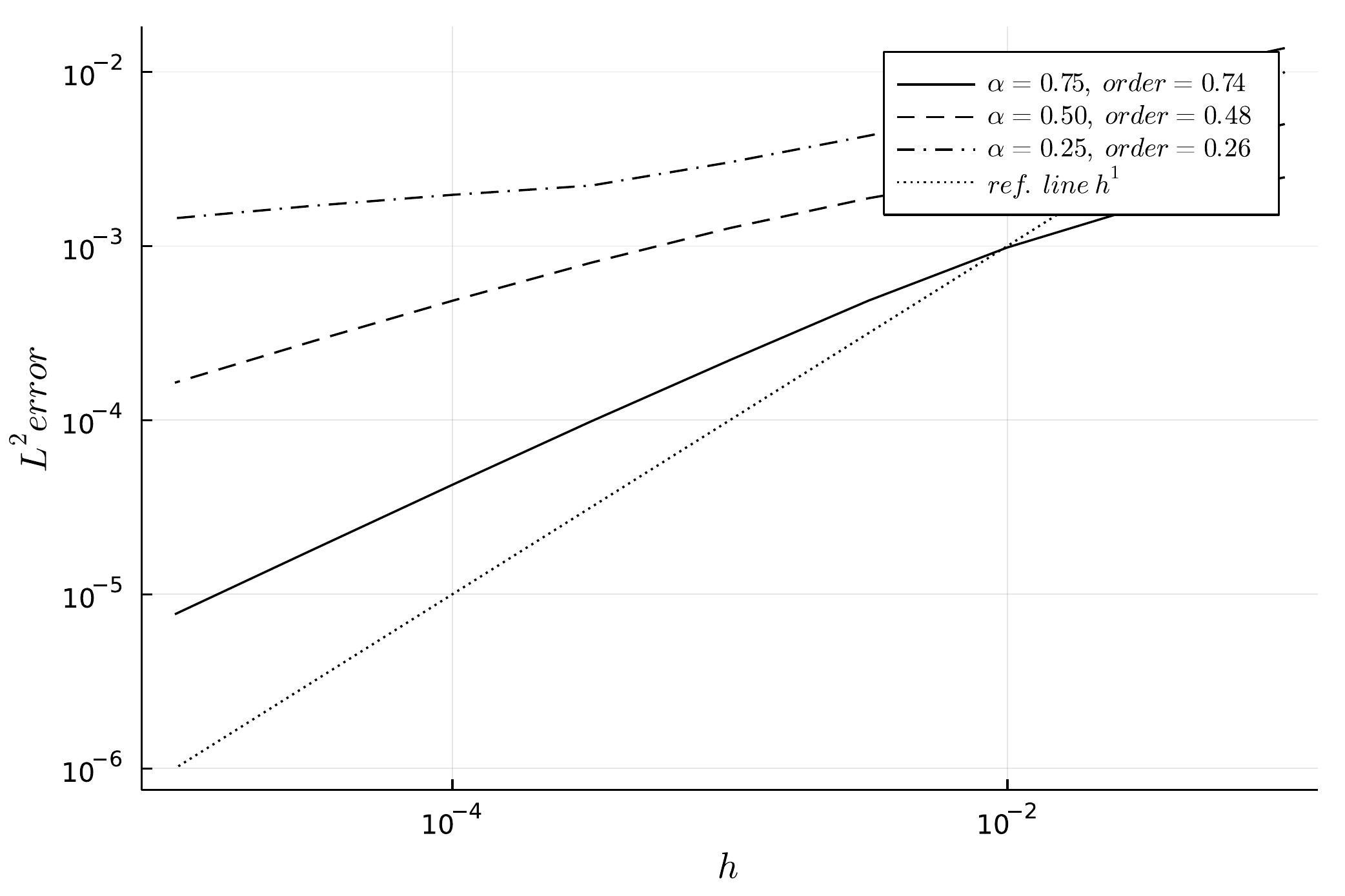}
\caption{A log-log plot of the $L^2$ maximal in time error for (\ref{eqn:Exact}) and initial condition (\ref{eqn:TIC}) with respect to the time step $h$. Here, $N=5$, $\mu = \alpha$. Calculated order of convergence for different $\alpha$ is given in the legend.}
\label{fig:tOrderMax}
\end{figure}

The estimated order for the more realistic example, that is, the Fisher-Kolmogorov equation (\ref{eqn:FK}) is presented in Tab. \ref{tab:FKOrder}. We see that in every considered case, the data are consistent with the typical behavior of the solutions to the subdiffusion equation when the error is calculated at a fixed point. That is, the order is equal to 1. We can also observe that the order estimated from Aitken's method for the maximal error, that is, order$_\infty$, is close to the anticipated $\alpha$. This is true especially for larger values of $\alpha$, say $\alpha\geq 0.5$. On the other hand, even for a very fine grid with spacing $h=2^{-13}$ the calculated order diverges from $\alpha$ for its small values. This is due to extremely slow convergence of the temporal scheme and, above all, approximate nature of the Aitken's extrapolation method. Therefore, we did not obtain a meaningful estimation of the temporal order for a very small $\alpha$. However, the computations shown in Fig. \ref{fig:tOrder}, where we have used exact solutions, verify the claimed order of convergence. Therefore, we can conclude that the numerical computations verify the claim of Theorem \ref{thm:Convergence} and indicate that more analytical work is required to actually show the optimal convergence error stated in Remark \ref{rem:Error}.

\begin{table}
	\centering
	\begin{tabular}{cccccccccccc}
		\toprule
		$\alpha$ & $0.01$ & $0.1$ & $0.2$ & $0.3$ & $0.4$ & $0.5$ & $0.6$ & $0.7$ & $0.8$ & $0.9$ & $0.99$ \\
		\midrule
		order$_\infty$ & - & - & - & $0.17$ & $0.27$ & $0.53$ & $0.58$ & $0.68$ & $0.79$ & $0.90$ & $0.98$\\
		order & $0.99$ & $0.97$ & $0.97$ & $0.98$ & $1.00$ & $1.02$ & $1.03$ & $1.03$ & $1.03$ & $1.03$ & $1.01$\\
		\bottomrule
	\end{tabular}
	\caption{Estimated temporal order of convergence for the Fisher-Kolmogorov equation (\ref{eqn:FK}) for different $\alpha$ using Aitken's method (\ref{eqn:Aitken}) with base $h=2^{-13}$. }
	\label{tab:FKOrder}
\end{table}

As the next test of our scheme, we would like to investigate how well it can resolve the long-time behavior of solutions. To this end, we solve the subdiffusion equation with the following source
\begin{equation}
	\label{eqn:LongTimeSource}
	f(x, t, u) = \pi^2(2u^2+u-1),
\end{equation} 
along with the same diffusivity as before, that is, $D(u) = 1 + u$ and the initial condition $\varphi(x) = x(1-x)$. The exact solution is not available in closed form; however, for long times it will converge to the steady state $v = v(x)$ satisfying
\begin{equation}
	\label{eqn:LongTimeSteady}
	-\left((1+v)v'\right)' = \pi^2 (2v^2 + v -1), \quad v(0) = v(1) = 0.
\end{equation}
It is easy to verify that the solution of the above is $v(x) = \sin(\pi x)$. We would like to investigate numerically the difference $|v(x)-U^n(x)|$ as $t_n\rightarrow\infty$. The results of computations for a representative parameters $\alpha = 0.5$, $N = 10$ at $x = 0.5$ are presented in Fig. \ref{fig:LongTime}. As we can see, the method remains stable as the difference vanishes to zero. Using a finer mesh leads to a more uniform approximation of the valid solution throughout the interval $[0,2^{12}]$ and we obtain decent results for a very modest choice of the grid spacing $h=0.5$. Observe that for all choices of $h$ each curve becomes parallel to $t^{-\alpha}$ at large times. Therefore, we can conclude that our scheme is valid for long-time computations. 

\begin{figure}
	\centering
	\includegraphics[scale = 0.7]{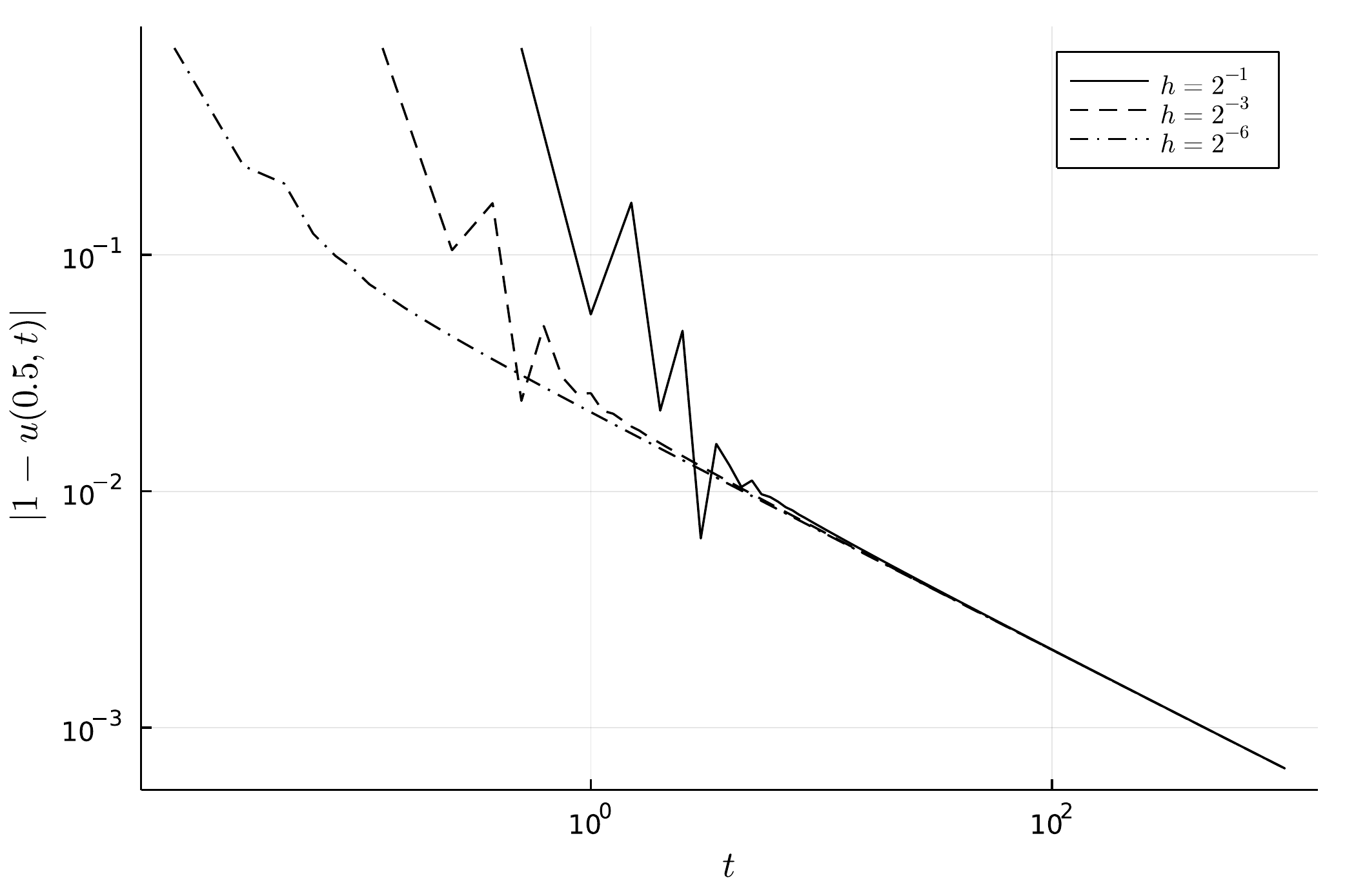}
	\caption{A log-log plot of the difference $|v(0.5)-U^n(0.5)|$ where $U^n$ is the numerical solution of the subdiffusion equation with the source (\ref{eqn:LongTimeSource}) and $v(x) = \sin(\pi x)$. Here, $\alpha = 0.5$, $N = 10$, and $T= 2\times 10^3$. }
	\label{fig:LongTime}
\end{figure}

At the end of this section we would like to investigate the benefit of using a multiple thread parallel computation of the spatial part of the scheme. That is, we compare the times required to evaluate the solution at time $t = 1$ with a fixed time step $h$ for different number of degrees of freedom with and without multi-threaded calculations. As our hardware, we have used a desktop with Intel$\textsuperscript{\textregistered}$ Core$\textsuperscript{\texttrademark}$ i9-12900K processor with 16 cores and 24 threads in order to adequately observe the influence of multithreading. Results are presented in Tab. \ref{tab:MultiThread} where we give the ratio $\tau$ of computation times required to solve the subdiffusion equation with (\ref{eqn:FK}) and $\varphi(x) = \sin (\pi x)$ for multi- and single-threaded evaluation, that is,
\begin{equation}
	\label{eqn:TimeRatio}
	\tau := \frac{\text{single-thread computation time}}{\text{multi-thread computation time}}.
\end{equation}
The computation times have been calculated carefully using the \emph{BenchmarkTools} package in Julia. In order to exclude compilation times from the overall time amount, this package uses statistical sampling for scheme evaluation. As a representative time, we have chosen the median. In these calculations, we vary the number of degrees of freedom $N$ while fixing all other parameters. As we can see, the time required to obtain the numerical solution is an increasing function of degrees of freedom. It can easily be shown that the increase is linear. The benefit of using multithreaded implementation is evident yielding around two times faster computations. 

\begin{table}
	\centering
	\begin{tabular}{cccccccccc}
		\toprule
		$N$ & 7 & 9 & 11 & 13 & 15 & 17 & 19 \\ 
		\midrule
		$\tau$ & 1.34 & 1.77 & 1.59 & 1.86 & 2.20 & 2.45 & 2.56 \\
		\bottomrule
	\end{tabular}
	\caption{Time ratio (\ref{eqn:TimeRatio}) of the single- and multi threaded computation of the solution to the scheme (\ref{eqn:Galerkin}) with (\ref{eqn:FK}) and $\alpha = 0.5$ for a fixed time step $h = 2^{-7}$ and varying degrees of freedom $N$. }
	\label{tab:MultiThread}
\end{table}

\section{Conclusion and future work}
The L1 scheme along with extrapolated in time spectral Galerkin method provides a moderately expensive scheme to solve quasilinear subdiffusion equations. Numerical simulations confirmed the anticipated orders, and our observations indicate that the time of computation is not high if we implement multi-threading. Our future work will be about adapting this technique to the Caputo derivative, i.e., we will investigate the parallel in time integration \cite{Gan07,Xu15}. This will provide even more optimization for finding solutions to equations that are nonlocal in time. Moreover, we are going to rigorously investigate the behavior of the scheme for solutions that are nonsmooth in time, that is, to prove the claim of Remark \ref{rem:Error} for the quasilinear case for the uniform and graded meshes.

\section*{Acknowledgement}
Ł.P. has been supported by the National Science Centre, Poland (NCN) under the grant Sonata Bis with a number NCN 2020/38/E/ST1/00153.

\bibliography{biblio}
\bibliographystyle{plain}

\end{document}